\newtheorem{theorem}{Theorem}[section]
\newtheorem{proposition}[theorem]{Proposition}
\newtheorem{corollary}[theorem]{Corollary}
\newtheorem{remark}[theorem]{Remark}
\newtheorem{lemma}[theorem]{Lemma}
\newtheorem{problem}[theorem]{Problem}
\newtheorem{definition}[theorem]{Definition}
\numberwithin{equation}{section}
\begin{document}

\title{Sinkhorn-Knopp Theorem for PPT states}

\author[Cariello ]{D. Cariello}

\address{Faculdade de Matem\'atica, \newline\indent Universidade Federal de Uberl\^{a}ndia, \newline\indent 38.400-902 Uberl\^{a}ndia, Brazil.}\email{dcariello@ufu.br}


\keywords{ Sinkhorn, Entanglement, Separability,  PPT states }
\subjclass[2010]{}

\maketitle

\begin{abstract} Given a PPT state $A=\sum_{i=1}^nA_i\otimes B_i \in M_k\otimes M_k$ and a vector $v\in\Im(A)\subset\mathbb{C}^k\otimes\mathbb{C}^k$ with tensor rank $k$, we provide an algorithm that checks whether the positive map $G_A:M_k\rightarrow M_k$, $G_A(X)=\sum_{i=1}^n tr(A_iX)B_i$, is equivalent to a doubly stochastic map. This procedure is based on the search for Perron eigenvectors of completely positive maps and  unique solutions of, at most, $k$ unconstrained quadratic minimization problems.  As a corollary, we can check whether this state can be put in the filter normal form. This normal form is an important tool for studying quantum entanglement.  An extension of this procedure to PPT states in $M_k\otimes M_m$ is also presented. 
\end{abstract}

\section{Introduction}

The separability problem in Quantum Information Theory asks for a  criterion to detect  entanglement of quantum states. Denote by  $M_k$ and $P_k$  the set of complex matrices of order $k$ and the set of positive semidefinite Hermitian matrices of order $k$, respectively.  

This problem is known to be a hard problem \cite{gurvits2003,gurvits2004} even for bipartite mixed states, which 
 can be briefly described as: \\

\begin{quote}
Determine whether a given $A\in M_k\otimes M_m$ is separable or entangled ($A$ is separable if $A=\sum_{i=1}^n C_i\otimes D_i$, where $C_i\in P_k, D_i\in P_m$, for every $i$, and entangled otherwise).\\
\end{quote}

This problem was only solved  for $km\leq 6$. Its original solution requires the classification of positive maps $T:M_k\rightarrow M_m$, $km\leq 6$, which is unknown for $km>6$.
 
Let us identify $M_k\otimes M_m\simeq M_{km}$ via Kronecker product. We shall say that $A=\sum_{i=1}^nA_i\otimes B_i \in M_k\otimes M_m\simeq M_{km}$ is positive under partial transposition, or simply PPT, if  $A$ and its partial transposition $A^{t_2}=\sum_{i=1}^nA_i\otimes B_i^t$ are positive semidefinite Hermitian matrices.

Now, the separable matrices in  $M_k\otimes M_m (km\leq 6)$ are just the PPT matrices \cite{peres, horodeckifamily}.

An alternative proof of this result (not based on positive maps)  was obtained  in $M_2\otimes M_2$ \cite{leinaas}.  Given a positive definite matrix $B\in M_2\otimes M_2\simeq M_4$, there are invertible matrices $R,S\in M_2$ such that 
\begin{equation}\label{eq=filter}
(R\otimes S)B(R\otimes S)^*=\lambda_1\gamma_1\otimes\gamma_1 +\lambda_2\gamma_2\otimes\gamma_2+
\lambda_3\gamma_3\otimes\gamma_3+\lambda_4\gamma_4\otimes\gamma_4,
\end{equation}

\vspace{0.2 cm}\noindent
where $\gamma_1=\frac{1}{\sqrt{2}}Id$, $\gamma_2,\gamma_3,\gamma_4$ are the normalized Pauli matrices and $\lambda_i\in\mathbb{R}$. 
The separability of this $B$ is equivalent  to satisfying the following inequality (see \cite{leinaas}): 

\begin{equation}\label{eq=inequality}
\lambda_1\geq |\lambda_2|+|\lambda_3|+|\lambda_4|.
\end{equation}

\vspace{0.5 cm}

The canonical form presented in  (\ref{eq=filter}) is the so-called filter normal form. 
In general, we say that $A\in M_k\otimes M_m$ can be put in the filter normal form if there are invertible matrices $R\in M_k, S\in M_m$ such that 
$(R\otimes S)A(R\otimes S)^*=\sum_{i=1}^sC_i\otimes D_i$, where $C_1=\frac{1}{\sqrt{k}}Id$, $D_1=\frac{1}{\sqrt{m}}Id$ and $tr(C_iC_j)=tr(D_iD_j)=0$, for every $i\neq j$ \cite{filternormalform, leinaas}. 

Now, the inequality (\ref{eq=inequality}) is a special case of a more general type of inequality (\cite[Theorems 61-62]{carielloQIC}), which provides a sufficient condition for separability. 
Note that  (\ref{eq=inequality}) is also necessary for separability in $M_2\otimes M_2$ and has been obtained from the matrices in the filter normal form. Thus, matrices in this  form
seem to provide sharper inequalities.

Moreover, there are several criteria for detecting entanglement or separability \cite{guhnesurvey}. The filter normal form has also been used to prove the equivalence of some of them \cite{Git}.  Therefore, it plays an important role in Quantum Information Theory.

In this work, we tackle the problem of finding  a necessary and sufficient computable condition for the filter normal form of PPT states.  Our main result is based on a connection  between this form and the Sinkhorn-Knopp theorem for positive maps.

Let $V\in M_k$ be an orthogonal projection and  $VM_kV=\{VXV,\ X\in M_k\}$.
A positive map $T:VM_kV\rightarrow WM_mW$ is a linear transformation such that $T(VM_kV\cap P_k)\subset WM_mW\cap P_m$.
A positive map $T:M_k\rightarrow M_m$ is called doubly stochastic \cite{Landau} if\\ \begin{center}
$T(\frac{Id}{\sqrt{k}})=\frac{Id}{\sqrt{m}}$ and $T^*(\frac{Id}{\sqrt{m}})=\frac{Id}{\sqrt{k}}$,
\end{center} 
\vspace{0.5 cm}\noindent
where $T^*$ is the adjoint of $T$ with respect to the trace inner product: $\langle A,B\rangle=tr(AB^*)$ .

 Two positive maps $L_1:M_k\rightarrow M_m$ and $L_2:M_k\rightarrow M_m$ are said to be  equivalent,  if there are invertible matrices $R_1\in M_k$
and $S_1\in M_m$ such that $L_2(X)=S_1L_1(R_1XR_1^*)S_1^*$.

Now, given a positive semidefinite Hermitian matrix $A=\sum_{i=1}^nA_i\otimes B_i \in M_k\otimes M_m$, consider the positive maps $G_A:M_k\rightarrow M_m$ and  $F_A:M_m\rightarrow M_k$ defined as \\
\begin{center}
$G_A(X)=\sum_{i=1}^n tr(A_iX)B_i$ and $F_A(X)=\sum_{i=1}^n tr(B_iX)A_i$.
\end{center}
\vspace{0.5 cm}

It has been noticed that $A\in M_k\otimes M_m$ can be put in the filter normal form if and only if $G_A:M_k\rightarrow M_m$ (or $G_A((\cdot)^t):M_k\rightarrow M_m$) is equivalent to a doubly stochastic map (e.g., \cite{cariellosink}). 

The next two conditions are known to be necessary and sufficient for the equivalence of a positive map $T:M_k\rightarrow M_m$ with a doubly stochastic one:\\
\begin{itemize}
\item For square maps ($k=m$), the capacity of  $T:M_k\rightarrow M_k$ is positive and achievable \cite{gurvits2004}.
\item For rectangular maps (any $k,m$), $T:M_k\rightarrow M_m$  is equivalent to a positive map with total support \cite[Theorem 3.7]{cariellosink}. \\
\end{itemize}

In the classical Sinkhorn-Knopp theorem \cite{Bapat,Brualdi,Sinkhorn,Sinkhorn2}, the concept of total support plays the key role. The second characterization above adapts this  concept to positive maps.

Some easy properties on $A\in M_k\otimes M_m\simeq M_{km}$ that grant the equivalence of $G_A$ with a doubly stochastic map were obtained in \cite{cariellosink}. 
For example, if $A$ is a positive semidefinite Hermitian matrix such that  either\\

\begin{itemize}
\item $\dim(\ker(A))<\min\{k,m\}\ ($if $k\neq m)$ and  $\dim(\ker(A))<k-1\ ($if $k=m)$ or
\item $G_A(Id)$ and $F_A(Id)$ are invertible and $\dim(\ker(A))<\frac{\max\{k,m\}}{\min\{k,m\}}$.\\
\end{itemize}

Moreover, if $k$ and $m$ are coprime then $G_A:M_k\rightarrow M_m$ is equivalent  to a positive map with total support if and only if it has support \cite[Corollary 3.8]{cariellosink}. 

Now, the positive map $G_A:M_k\rightarrow M_m$ has support if and only if it is fractional-rank non-decreasing \cite[Lemma 2.3]{cariellosink}. A polynomial-time algorithm that checks whether $G_A:M_k\rightarrow M_m$ is fractional-rank non-decreasing was provided in \cite{garg, garg2}. So the equivalence can be efficiently checked when $k$ and $m$ are coprime.

Unfortunately, an algorithm that checks whether the capacity is achievable or the existence of an  equivalent map with total support is unknown (in general).
Our main result is  an algorithm to solve this problem for positive maps $G_A((\cdot)^t):M_k\rightarrow M_m$, where $A\in M_k\otimes M_m$ is a PPT matrix with one extra property  (Algorithm 3). 

Firstly, we consider a PPT matrix $B\in M_k\otimes M_k$ such that there is  $v\in\Im(B)\subset \mathbb{C}^k\otimes\mathbb{C}^k$ with tensor rank $k$. 
There is an invertible matrix $P\in M_k$ such that $\Im(T(X))\supset\Im(X)$, for every $X\in P_k$, where $T(X)=G_C(X^t)$ and $C=(P\otimes Id)B(P\otimes Id)^*$ (Lemma \ref{lemmaP}).\\

A remarkable property owned by every PPT matrix was used in \cite{carielloIEEE} to reduce the separability problem to the weakly irreducible PPT case.  Here, we use the same property (Proposition \ref{propkey}) to show that this positive map $T=G_C(X^t):M_k\rightarrow M_k$ is equivalent to a doubly stochastic map if and only if the following condition holds (Theorem \ref{theoremprincipal}): \\

\begin{quote}
For every orthogonal projection $V\in M_k$ such that $T(VM_kV)\subset VM_kV$  and $T|_{VM_kV}$ is irreducible with spectral radius $\lambda$ (see Definition \ref{definitionIrredPerron}), there is a \textit{\textbf{unique}} orthogonal projection $W\in M_k$ such that $T^*(WM_kW)\subset WM_kW$, $T^*|_{WM_kW}$ is irreducible with spectral radius $\lambda$, $rank(W)= rank(V)$ and
 $\ker(W)\cap\Im(V)=\{\vec{0}\}$.\\
\end{quote}

Our algorithm 3 searches all possible pairs $(V,W)$. This is possible since there are at most $k$ pairs when $B\in M_k\otimes M_k$ is PPT (see Lemma \ref{lemmakey}).

Next, in order to find such $V$ (algorithm 1), we must look for Perron eigenvectors (see Definition \ref{definitionIrredPerron}). 
Given $V$,  the corresponding $W$ is related to the unique zero of  the function $f:M_{k-s\times s}(\mathbb{C})\rightarrow \mathbb{R}^+\cup\{0\}$ defined as $$f(X)=trace\left(T_1^*\left(\begin{pmatrix}Id& X^* \\ 
X& XX^*\end{pmatrix}\right)\begin{pmatrix}X^*X & -X^* \\ 
-X & Id\end{pmatrix}\right),$$
where $T_1(x)=\frac{1}{\lambda}QT(Q^{-1}X(Q^{-1})^*)Q^*$, for a suitable invertible matrix $Q\in M_k$ (Lemma \ref{lemmaQ}).

Note that $f(X)$ seems to be a quartic function. It turns out that $f(X)$ is quadratic, since $T$ is completely positive (check the formula for $f(X)$ in Lemma \ref{solutionW}).  If we regard $M_{k-s\times s}(\mathbb{C})$ as a real vector space then the search for a zero of $f(X)$ is an unconstrained quadratic minimization problem (see Lemma \ref{solutionW} and Remark \ref{remarkquadratic}). 

Since $W$ must be unique then we shall solve this unconstrained quadratic minimization problem only when the solution is known to be unique (Remark \ref{remarkquadratic}). Therefore, the only difficult  part of our algorithm is finding Perron eigenvectors of completely positive maps. 

 There are some algorithms that can be used to find positive definite Hermitian matrices within linear subspaces \cite{HUHTANEN, Zaidi}, however these Perron eigenvectors might be positive semidefinite.\\

Secondly, we extend the result to PPT matrices in $M_k\otimes M_m$ using  \cite[Corollary 3.5]{cariellosink}. Recall the identification $M_m\otimes M_k \simeq M_{mk}$. Let $B=\sum_{i=1}^nC_i\otimes D_i\in M_k\otimes M_m$ be a PPT matrix and define $\widetilde{B}\in M_{mk}\otimes M_{mk}$ as $$\widetilde{B}=\sum_{i=1}^n (Id_{m}\otimes C_i)\otimes (D_i\otimes Id_k).$$ 

 We show that $G_B((\cdot)^t): M_k\rightarrow M_m$ is equivalent to a doubly stochastic map if and only if $G_{\widetilde{B}}((\cdot)^t): M_{mk}\rightarrow M_{mk}$ is equivalent to a doubly stochastic map. Thus, our algorithm 3 can be applied on $\widetilde{B}$ to determine whether $B$ can be put in the filter normal form.\\

 The existence of $v$ with full tensor rank within the range of $A\in M_k\otimes M_k$ is essential for our algorithm to work. There are several possible properties to  impose on a subspace  of $\mathbb{C}^k\otimes \mathbb{C}^k$ in order to guarantee the existence of such $v$   within this subspace \cite{lovasz, meshulam1985, meshulam1989, meshulam2017, pazzis, flanders, dieudonne}. The Edmonds-Rado property \cite{gurvits2003, gurvits2004, ivanyos} is the most relevant to our problem.

Note that, if $D=\sum_{i=1}^nv_i\overline{v_i}^t\in M_k\otimes M_k\simeq M_{k^2}$, where $v_i\in \mathbb{C}^k\otimes\mathbb{C}^k\simeq \mathbb{C}^{k^2}$, then $$G_D(X^t)=\sum_{i=1}^nR_iXR_i^*,$$ where $R_i=F(v_i)^t$ and $F:\mathbb{C}^k\otimes\mathbb{C}^k\rightarrow M_k$ is defined by $F(\sum_{l=1}^t a_i\otimes b_i)=\sum_{l=1}^t a_ib_i^t$. In other words, $D$ is the Choi matrix associated to the completely positive map $G_D(X^t)$ \cite{Choi}. Actually, the map $D\rightarrow F_D((\cdot)^t)$ is the inverse of Jamio{\l}kowski isomorphism \cite{Jamio}. \\

A well known necessary condition for the equivalence of $G_D(X^t)$ with a doubly stochastic map is to be rank non-decreasing, i.e.,  $\text{rank}(G_D(X^t))\geq \text{rank}(X)$ for every $X\in P_k$ \cite{gurvits2003}.
We say that the $\Im(D)=\text{span}\{v_1,\ldots,v_n\}$ has the Edmonds-Rado property, if the existence of $v\in\Im(D)$ with tensor rank $k$ is equivalent to  $G_D(X^t)$ being rank non-decreasing.\\

Now, if $\Im(D)$ is generated by rank 1 tensors then it has the Edmonds-Rado property  \cite{lovasz, gurvits2003}. Therefore the range of every separable matrix has this property \cite{pawel}. Thus, if $D$ is separable with no vector $v$ with full tensor rank in its range then $D$ can not be put in the filter normal form.

We do not know whether $\Im(D)$ has the Edmonds-Rado property when $D$ is only PPT. If the range of any PPT matrix had this property then our algorithm would work for any PPT matrix, since the existence of $v$ would be granted whenever $G_D(X^t)$ is rank non-decreasing.

Finally, there are polynomial-time algorithms that check whether there is $v\in\Im(D)$ with tensor rank $k$  \cite{gurvits2003} and whether $G_D(X^t)$ is rank non-decreasing \cite{garg}.\\

This paper is organized as follows. In Section 2, we present some very important preliminary results. It is worth noticing that Proposition \ref{propkey} is the key that makes our main algorithm (algorithm 3) to work.
This proposition describes the complete reducibility property owned by every PPT matrix \cite{carielloIEEE}. 
In Section 3, the reader can find our main result (Theorem \ref{theoremprincipal}) and one problem derived from this main result (Problem \ref{problem}). This problem can be solved as a unconstrained quadratic minimization problem (Lemma \ref{solutionW}, Remark \ref{remarkquadratic}). In Section 4, we bring all the results together in our algorithms. 
Algorithm 3 provides a way to determine whether a completely positive map originated from a PPT matrix in $M_k\otimes M_k$ (with a rank $k$ tensor within its image) is equivalent to a doubly stochastic map or not. This algorithm can be used to compute the filter normal form of PPT matrices.
In Section 5, we extend our results to PPT matrices in $M_k\otimes M_m$, $k\neq m$. \\

\textbf{Notation:} Let $V\in M_k$ be an orthogonal projection.
Denote by  $VM_k+M_kV=\{VX+YV,\ X,Y\in M_k\}$ and $V^{\perp}=Id-V$, where $V\in M_k$ is an orthogonal projection. Let $\Im(A)$ be the image of the matrix $A$ and $tr(A)$ be its trace. We shall use the trace inner product in $M_k$: $\langle A,B\rangle=tr(AB^*)$.

\newpage

\section{Preliminary Results}

In this section, we present some preliminary results. It is worth noticing that Proposition \ref{propkey} is the key that makes our main algorithm (algorithm 3) to work.
This proposition describes the complete reducibility property  \cite{carielloIEEE}.  \\

\begin{definition}\label{definitionIrredPerron} Let $V,V'\in M_k$ be orthogonal projections. A non-null positive map $T:VM_kV\rightarrow VM_kV$ is called irreducible if $T(V'M_kV')\subset V'M_kV'$ implies that $V=V'$ or $V'=0$. The spectral radius of $T:VM_kV\rightarrow VM_kV$ is the largest absolute value of an eigenvalue of $T:VM_kV\rightarrow VM_kV$. A eigenvector  $\gamma\in P_k$  associated to the spectral radius of $T:VM_kV\rightarrow VM_kV$ is called a Perron eigenvector of $T$.
\end{definition}
\begin{remark} The existence of a Perron eigenvector of an arbitrary positive map $T:VM_kV\rightarrow VM_kV$ is granted by Perron-Frobenius Theory \cite[Theorem 2.3]{evans}.
\end{remark}

\begin{definition}\label{defcompletepositive} A positive map $T:M_k\rightarrow M_k$ is completely positive if $T(X)=\sum_{i=1}^nA_iXA_i^*$, where $A_i\in M_k$ for every $i$. 
\end{definition}

The next three lemmas are well known. We present their proof for the convenience of the reader in Appendix.

\begin{lemma}\label{lemma1} Let $T:M_k\rightarrow M_k$ be a completely positive map. If $V\in M_k$ is an orthogonal projection  such that $T(VM_kV)\subset VM_kV$ then $T(VM_k+M_kV)\subset VM_k+M_kV$.
\end{lemma}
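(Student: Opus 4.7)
The plan is to use the Kraus-type representation of $T$ as a completely positive map and exploit the positivity of $T(V)$ to show that each Kraus operator respects the decomposition $Id = V + V^{\perp}$ in a one-sided way; this one-sided compatibility then forces $T$ to send $VM_k$ into $VM_k$ and $M_kV$ into $M_kV$ separately.

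Concretely, I write $T(X)=\sum_{i=1}^n A_iXA_i^*$. The hypothesis $T(VM_kV)\subset VM_kV$ gives in particular $T(V)\in VM_kV$, so $V^{\perp}T(V)V^{\perp}=0$. Expanding,
\[
0 = V^{\perp} T(V) V^{\perp} = \sum_{i=1}^n V^{\perp}A_i V A_i^* V^{\perp} = \sum_{i=1}^n (V^{\perp}A_iV)(V^{\perp}A_iV)^*.
\]
Since this is a sum of positive semidefinite matrices summing to zero, each summand vanishes, hence $V^{\perp}A_iV=0$ for every $i$. Equivalently, $A_iV = VA_iV$ and, by taking adjoints, $VA_i^* = VA_i^*V$ for every $i$.

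With these two identities in hand, the inclusion is a one-line check. For any $X\in M_k$,
\[
T(VX) = \sum_{i=1}^n A_iVX A_i^* = \sum_{i=1}^n (VA_iV)XA_i^* \in VM_k,
\]
and similarly
\[
T(XV) = \sum_{i=1}^n A_iX(VA_i^*) = \sum_{i=1}^n A_iX(VA_i^*V) \in M_kV.
\]
Therefore $T(VX+YV)=T(VX)+T(YV)\in VM_k+M_kV$ for all $X,Y\in M_k$, which is the desired conclusion.

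There is no real obstacle here: the only nontrivial step is the passage from $V^{\perp}T(V)V^{\perp}=0$ to $V^{\perp}A_iV=0$ for every individual $i$, which relies on the fact that a sum of positive semidefinite matrices is zero only if each summand is zero. Everything else is elementary manipulation of the Kraus operators.
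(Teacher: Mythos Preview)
Your proof is correct and follows essentially the same approach as the paper: both use the Kraus representation, deduce from $T(V)\in VM_kV$ that each individual $A_iV$ lies in $VM_k$ (you via $V^{\perp}T(V)V^{\perp}=0$ and the sum-of-PSD argument, the paper via the equivalent observation that $A_iVA_i^*\in VM_kV$ forces $A_iV\in VM_k$), and then verify the inclusion term by term. Your write-up is in fact a bit more explicit than the paper's about why the key step works.
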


\vspace{0.3 cm}

\begin{lemma}\label{lemma2} Let $T:M_k\rightarrow M_k$ be a positive map. Let $V_1,V\in M_k$ be orthogonal projections such that $\Im(V_1)\subset\Im(V)$ and $T(VM_kV)\subset VM_kV$.
\begin{itemize}
\item[a)] If $T(V_1M_kV_1)\subset V_1M_kV_1$ then $VT^*((V-V_1)M_k(V-V_1)))V\subset (V-V_1)M_k(V-V_1)$.
\item[b)] $T:VM_kV\rightarrow VM_kV$ is irreducible if and only if $VT^*(\cdot)V:VM_kV\rightarrow VM_kV$ is irreducible.
\item[c)] If $\delta\in P_k\cap VM_kV$ is such that $\Im(\delta)=\Im(V)$ and $T(\delta)=\lambda\delta$, $\lambda>0$, then $\lambda$ is the spectral radius of $T|_{VM_kV}$.
\end{itemize}
\end{lemma}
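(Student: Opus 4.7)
The plan is to prove the three parts in order, relying only on positivity of $T$ (and hence of $T^*$) together with elementary block arguments.

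For (a), I would start from the observation that $T^*$ is itself a positive map: for $A,B\in P_k$, $\langle T^*(A),B\rangle=\langle A,T(B)\rangle\geq 0$. Pick any $Y\in (V-V_1)M_k(V-V_1)\cap P_k$ and any $X\in V_1M_kV_1\cap P_k$; since $T(X)\in V_1M_kV_1$ by hypothesis and $Y$ is supported on the orthogonal block $V-V_1$, the pairing $\langle X,T^*(Y)\rangle=\langle T(X),Y\rangle$ vanishes. Taking in particular $X=V_1T^*(Y)V_1\in P_k$ gives $tr\bigl((V_1T^*(Y)V_1)^2\bigr)=0$, hence $V_1T^*(Y)V_1=0$. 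Since $T^*(Y)\geq 0$, the standard fact that a positive semidefinite matrix with a vanishing diagonal block has vanishing corresponding rows and columns ($PMP=0$, $M\geq 0 \Rightarrow M^{1/2}P=0$) upgrades this to $V_1T^*(Y)=T^*(Y)V_1=0$. This forces $VT^*(Y)V=(V-V_1)\,VT^*(Y)V\,(V-V_1)$, which is the desired containment. Linearity over the spanning PSD cone extends the conclusion to all $Y\in (V-V_1)M_k(V-V_1)$.

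For (b), I note that $VT^*(\cdot)V|_{VM_kV}$ is the adjoint of $T|_{VM_kV}$ with respect to the trace inner product restricted to $VM_kV$, and is positive (composition of the positive maps $T^*$ and $X\mapsto VXV$). If $T|_{VM_kV}$ has a nontrivial invariant subprojection $V_1<V$, part (a) produces $V-V_1$ as a nontrivial invariant subprojection for $VT^*(\cdot)V|_{VM_kV}$, so the latter is reducible. The converse follows by applying (a) to $VT^*(\cdot)V|_{VM_kV}$, whose adjoint on $VM_kV$ is $T|_{VM_kV}$.

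For (c), the idea is to conjugate $T|_{VM_kV}$ into a map whose spectral radius is transparent. Since $\Im(\delta)=\Im(V)$, $\delta$ is invertible on $\Im(V)$, so $\delta^{\pm 1/2}\in VM_kV$ are well defined. Set $S(X)=\delta^{-1/2}T(\delta^{1/2}X\delta^{1/2})\delta^{-1/2}$ on $VM_kV$; this map is positive, similar to $T|_{VM_kV}$ (hence has identical spectrum), and satisfies $S(V)=\delta^{-1/2}T(\delta)\delta^{-1/2}=\lambda V$. For positive maps on $M_s$ (with $s=rank(V)$) that fix $V$ up to scaling by $\lambda$, sandwiching $-\|H\|V\leq H\leq\|H\|V$ on Hermitian parts yields $\|S^n\|\leq 2\|S^n(V)\|_{\rm op}=2\lambda^n$, so $\rho(S)\leq\lambda$; since $\lambda$ is already an eigenvalue, $\rho(S)=\lambda=\rho(T|_{VM_kV})$. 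The only nontrivial ingredient in the whole lemma is the block-PSD principle used in (a); everything else (trace duality, composition of positive maps, similarity invariance of the spectrum, and the norm estimate for positive unital-up-to-scale maps) is routine.
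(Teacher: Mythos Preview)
Your proof is correct and follows essentially the same route as the paper: the trace-pairing/PSD-block argument for (a), the adjoint symmetry for (b), and the $\delta^{1/2}$-conjugation to a unital-up-to-scalar positive map for (c). The only cosmetic difference is that in (c) you prove the contractivity bound inline via the sandwich $-\|H\|V\le H\le \|H\|V$ and the Gelfand formula, whereas the paper invokes the Russo--Dye theorem (cited from Bhatia) to bound the operator norm directly.
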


\vspace{0.3 cm}

\begin{lemma}\label{lemma3} Let $T:M_k\rightarrow M_k$ be a completely positive map. Let us assume that $T(VM_kV)\subset VM_kV$, where $V\in M_k$ is an orthogonal projection. Let $\lambda$ be the spectral radius of $T|_{VM_kV}$. Therefore,  $T|_{VM_kV}$ is irreducible if and only if the following conditions hold.
\begin{enumerate}
\item There  are $\gamma,\delta \in VM_kV\cap P_k$ such that $T(\gamma)=\lambda\gamma$, $VT^*(\delta)V=\lambda\delta$ and $\Im(\gamma)=\Im(\delta)=\Im(V)$.
\item The geometric multiplicity of $\lambda$ for $T|_{VM_kV}$ and $VT^*(\cdot)V|_{VM_kV}$ is 1.\\
\end{enumerate}
\end{lemma}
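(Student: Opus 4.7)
The plan is to prove each direction via a Perron--Frobenius style argument that exploits complete positivity (not merely positivity) of $T$.

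Forward direction: Assuming $T|_{VM_kV}$ is irreducible, the Remark after Definition \ref{definitionIrredPerron} yields a Perron eigenvector $\gamma\in VM_kV\cap P_k$ with $T(\gamma)=\lambda\gamma$. To show $\Im(\gamma)=\Im(V)$, let $V_1$ be the support projection of $\gamma$; I will argue that $T(V_1M_kV_1)\subset V_1M_kV_1$, which by irreducibility forces $V_1=V$. Writing $T(X)=\sum_i A_iXA_i^*$ with $A_i\in VM_kV$ (arranged by replacing $A_i$ with $VA_iV$), the identity $\sum_i A_i\gamma A_i^*=\lambda\gamma$ together with $\Im(\lambda\gamma)\subset\Im(V_1)$ forces $(V-V_1)A_i\gamma A_i^*(V-V_1)=0$ for every $i$, hence $(V-V_1)A_i\gamma^{1/2}=0$, and since $\Im(\gamma^{1/2})=\Im(V_1)$ we obtain $A_iV_1=V_1A_iV_1$, giving the invariance. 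For $\delta$, apply the same reasoning to $VT^*(\cdot)V|_{VM_kV}$, which is completely positive (since $T^*(Y)=\sum_i A_i^*YA_i$) and irreducible by Lemma \ref{lemma2}(b), with spectral radius $\lambda$ because an operator and its adjoint share spectral radii. Lemma \ref{lemma2}(c) then confirms the eigenvalue is $\lambda$.

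For geometric multiplicity, suppose $\gamma'\in VM_kV$ is a Hermitian eigenvector with $T(\gamma')=\lambda\gamma'$. Since $\Im(\gamma)=\Im(V)$, the set $\{s\in\mathbb{R}:\gamma-s\gamma'\geq 0\}$ is a nonempty closed bounded interval; let $s_+$ be its supremum. Then $\gamma-s_+\gamma'$ is positive semidefinite, singular on $\Im(V)$, and satisfies $T(\gamma-s_+\gamma')=\lambda(\gamma-s_+\gamma')$. If it were nonzero, the previous paragraph would force $\Im(\gamma-s_+\gamma')=\Im(V)$, contradicting singularity, so $\gamma'\in\mathrm{span}(\gamma)$. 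A general complex eigenvector splits into real and imaginary parts, each an eigenvector. The identical argument for $VT^*(\cdot)V$ gives the second half of condition (2).

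Reverse direction: Assume (1) and (2) and suppose for contradiction that there is an orthogonal projection $V_1$ with $0\neq \Im(V_1)\subsetneq\Im(V)$ and $T(V_1M_kV_1)\subset V_1M_kV_1$. Perron--Frobenius applied to $T|_{V_1M_kV_1}$ yields a nonzero $\gamma_1\in V_1M_kV_1\cap P_k$ with $T(\gamma_1)=\mu_1\gamma_1$, $\mu_1\geq 0$. Using $\gamma_1=V\gamma_1V$,
\begin{align*}
\mu_1\langle\gamma_1,\delta\rangle=\langle T(\gamma_1),\delta\rangle=\langle\gamma_1,VT^*(\delta)V\rangle=\lambda\langle\gamma_1,\delta\rangle.
\end{align*}
Since $\Im(\delta)=\Im(V)\supset\Im(\gamma_1)$, diagonalizing $\gamma_1=\sum_j c_ju_ju_j^*$ with $c_j>0$ yields $\langle\gamma_1,\delta\rangle=\sum_j c_ju_j^*\delta u_j>0$, forcing $\mu_1=\lambda$. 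But then condition (2) forces $\gamma_1=c\gamma$ for some $c>0$, contradicting $\Im(\gamma_1)\subset\Im(V_1)\subsetneq\Im(V)=\Im(\gamma)$.

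The main obstacle is the invariance step $T(V_1M_kV_1)\subset V_1M_kV_1$ in the forward direction: the passage from the eigenvalue identity $\sum_i A_i\gamma A_i^*=\lambda\gamma$ to the structural constraint $A_iV_1=V_1A_iV_1$ on each Kraus operator is exactly where complete positivity (rather than just positivity) of $T$ plays an essential role. The remaining steps are standard Perron--Frobenius scaling and adjoint arguments prepared by Lemma \ref{lemma2}.
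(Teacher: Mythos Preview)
Your proof is correct and, in the converse direction, actually cleaner than the paper's. A brief comparison:

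\textbf{Forward direction.} The paper simply invokes Perron--Frobenius theory for positive maps on $C^*$-algebras (Evans--H{\o}egh-Krohn) to declare conditions (1) and (2) necessary. You instead prove this from scratch: the Kraus-operator argument showing that the support projection $V_1$ of a Perron eigenvector $\gamma$ gives an invariant subalgebra is exactly the mechanism behind Lemma~\ref{lemma1}, and the scaling argument for simplicity is the standard one. One small slip: the set $\{s\in\mathbb{R}:\gamma-s\gamma'\geq 0\}$ need not be bounded (e.g.\ if $\gamma'\geq 0$ it is a half-line), but it always has at least one finite endpoint since $\gamma'\neq 0$, and that endpoint is all you need.

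\textbf{Reverse direction.} Here the arguments genuinely diverge. The paper uses complete positivity again: via Lemma~\ref{lemma2}(a) and Lemma~\ref{lemma1} it shows $VT^*(\cdot)V$ preserves $(V-V_1)M_k+M_k(V-V_1)$, compresses the eigen-equation for $\delta$ to obtain $V_1T^*(V_1\delta V_1)V_1=\lambda V_1\delta V_1$, and then passes back to $T|_{V_1M_kV_1}$ to manufacture a second $\lambda$-eigenvector $\gamma'$. Your route is shorter: take any Perron eigenvector $\gamma_1$ of $T|_{V_1M_kV_1}$ and pair it with $\delta$ using $\gamma_1=V\gamma_1V$ to force the eigenvalue to equal $\lambda$. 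This pairing argument uses only positivity, not complete positivity, so your proof shows that the implication $(1)\wedge(2)\Rightarrow$ irreducible holds for arbitrary positive maps; the paper's proof of that implication invokes Lemma~\ref{lemma1} and hence complete positivity unnecessarily.
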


The next proposition is the key that makes our algorithm to  work for PPT matrices. This proposition describes a remarkable property owned by these matrices. The author of \cite{carielloIEEE} used this complete reducibility property to reduce the separability problem in Quantum Information Theory to the weakly irreducible PPT case. \\

\begin{proposition}\label{propkey} $($\textbf{The complete reducibility property of PPT states \cite{carielloIEEE}}$)\\ $Let $B\in M_k\otimes M_k$ be  a PPT matrix. Let  $\{W_1,\ldots, W_s\}\subset  M_k$ be orthogonal projections such that
\begin{itemize}
\item[a)] $W_iW_j=0$, for $i\neq j$,
\item[b)] $\sum_{i=1}^sW_i=Id$,
\item[c)] $W_iM_kW_i$ is left invariant by $G_B((\cdot)^t):M_k\rightarrow M_k$ for every $i$.
\end{itemize}
Then $B=\sum_{i=1}^s(W_i^t\otimes W_i)B(W_i^t\otimes W_i)$.
\end{proposition}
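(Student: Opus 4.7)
The plan is to combine two uses of positivity. The positivity of $B$, together with the invariance hypothesis, will force a one-sided support condition $B(W_i^t \otimes W_j)=0$ for $i\neq j$; the positivity of $B^{t_2}$ (the PPT assumption) will then kill the remaining off-diagonal blocks. Throughout, write $P_i := W_i^t \otimes W_i$ and $\widetilde{P}_{ij} := W_i^t \otimes W_j^t$.

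First I would unfold the invariance. Taking $X = W_i$, the hypothesis gives $G_B(W_i^t) \in W_iM_kW_i$, so $W_j G_B(W_i^t) W_j = 0$ and hence $tr(G_B(W_i^t) W_j) = 0$ for $j\neq i$. By the definition of $G_B$, this trace equals $tr\bigl(B\,(W_i^t\otimes W_j)\bigr)$. Since $B\ge 0$ and $W_i^t\otimes W_j$ is a Hermitian projection, the vanishing of their trace inner product forces $B(W_i^t\otimes W_j)=0$. Summing with $Id = \sum_{i,j} W_i^t\otimes W_j$ (and using $B=B^*$ for the adjoint statement) yields $B = \sum_i B P_i = \sum_i P_i B = \sum_{i,j} P_i B P_j$.

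The crux is to show $P_i B P_j = 0$ for $i\neq j$, and this is where the PPT assumption enters. The key tool is the partial-transpose identity
\begin{equation*}
\bigl[(\alpha\otimes\beta)\,X\,(\gamma\otimes\delta)\bigr]^{t_2} \;=\; (\alpha\otimes\delta^t)\,X^{t_2}\,(\gamma\otimes\beta^t),
\end{equation*}
which is checked on elementary tensors and extended by linearity. Applied with $X=B$ it gives $(P_i B P_j)^{t_2} = \widetilde{P}_{ij}\,B^{t_2}\,\widetilde{P}_{ji}$, and the same identity yields $\widetilde{P}_{ij}\,B^{t_2}\,\widetilde{P}_{ij} = \bigl[(W_i^t\otimes W_j)\,B\,(W_i^t\otimes W_j)\bigr]^{t_2} = 0$ for $i\neq j$ by the first step (and likewise $\widetilde{P}_{ji}\,B^{t_2}\,\widetilde{P}_{ji}=0$).

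For $i\neq j$ the projections $\widetilde{P}_{ij}$ and $\widetilde{P}_{ji}$ are mutually orthogonal (since $W_iW_j=0$), so the compression $(\widetilde{P}_{ij}+\widetilde{P}_{ji})B^{t_2}(\widetilde{P}_{ij}+\widetilde{P}_{ji})$ is PSD (from $B^{t_2}\geq 0$) and, by the above, has block form $\bigl(\begin{smallmatrix}0&C\\ C^*&0\end{smallmatrix}\bigr)$ with $C=(P_i B P_j)^{t_2}$. A PSD matrix with vanishing diagonal blocks has vanishing off-diagonal blocks, hence $C=0$ and $P_i B P_j = 0$. Combined with the first step this gives $B=\sum_i P_i B P_i$, as claimed. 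The main obstacle is the PPT step: without $B^{t_2}\ge 0$, the cross-block $P_i B P_j$ is unconstrained; the decisive move is the partial-transpose identity above, which converts the one-sided vanishing $B(W_i^t\otimes W_j)=0$ into a vanishing-diagonal condition for the relevant compression of $B^{t_2}$, after which the standard block Cauchy--Schwarz argument finishes.
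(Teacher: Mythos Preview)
Your proof is correct and follows essentially the same two-stage strategy as the paper: first use $B\ge 0$ and the invariance to get $B(W_i^t\otimes W_j)=0$ for $i\neq j$, then use $B^{t_2}\ge 0$ to kill the remaining cross terms $P_iBP_j$. The only cosmetic difference is that, in the PPT step, the paper reapplies the same ``$\mathrm{tr}(XY)=0$ with $X,Y\ge 0$ forces $XY=0$'' trick to $B^{t_2}$ (obtaining $B^{t_2}(W_j^t\otimes W_m^t)=0$ directly), whereas you use the block Cauchy--Schwarz argument on the $2\times 2$ compression; these are equivalent moves.
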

\begin{proof}
Note that $tr(B(W_j^t\otimes W_m))=tr(G_B(W_j^t)W_m)=0$, $ j\neq m$, by item $a)$ and $c)$.\\

Since $B$ and $W_j^t\otimes W_m$ are positive semidefinite Hermitian matrices then \begin{equation}\label{eq=0}
B(W_j^t\otimes W_m)=(W_j^t\otimes W_m)B=0,\ \   j\neq m.\\
\end{equation}

Thus, for $j\neq m$,  $(B(W_j^t\otimes W_m))^{t_2}=(Id\otimes W_m^t)B^{t_2}(W_j^t\otimes Id)=0$ 
and \begin{center}
$tr((Id\otimes W_m^t)B^{t_2}(W_j^t\otimes Id))=tr(B^{t_2}(W_j^t\otimes W_m^{t}))=0$
\end{center}

Since $B$ is PPT then $B^{t_2}$ is positive semidefinite. Thus, \begin{equation}\label{eq=1}
B^{t_2}(W_j^t\otimes W_m^t)=(W_j^t\otimes W_m^t)B^{t_2}=0,\ \   j\neq m.\\
\end{equation}

Now, by item $b)$ and equation \ref{eq=0}, we have $$B=\sum_{i,l,j,m=1}^s(W_i^t\otimes W_l)B(W_j^t\otimes W_m)=\sum_{i,j=1}^s(W_i^t\otimes W_i)B(W_j^t\otimes W_j).$$

By equation \ref{eq=1}, $B^{t_2}=\sum_{i,j=1}^s(W_i^t\otimes W_j^t)B^{t_2}(W_j^t\otimes W_i^t)=\sum_{i=1}^s(W_i^t\otimes W_i^t)B^{t_2}(W_i^t\otimes W_i^t).$\\

Finally, $B=\sum_{i=1}^s(W_i^t\otimes W_i)B(W_i^t\otimes W_i).$
\end{proof}
\vspace{0.5 cm}

The next lemma shall be used  in our main theorem to prove the uniqueness of $W$ (item 3 of \ref{theoremprincipal}). Its proof is another consequence of the complete reducibility property. 

\vspace{0.5 cm}

\begin{lemma}\label{lemmakey} Let $B\in M_k\otimes M_k$ be  a PPT matrix and $\{W_1,\ldots,W_s\}\subset M_k$  be orthogonal projections as in Proposition \ref{propkey}. Suppose that $WM_kW$ is left invariant by $G_B((\cdot)^t):M_k\rightarrow M_k$ and $G_B((\cdot)^t)|_{WM_kW}$ is irreducible. If there is $t\leq s$ such that $G_B((\cdot)^t)|_{W_iM_kW_i}$ is irreducible for every $1\leq i\leq t$ then either
$$W=W_j\ (\text{for some } 1\leq j\leq t)\ \ \ \ or \ \ \ \ \Im(W)\subset \Im(Id-W_1-\ldots-W_t).$$
This result is also valid if $G_B((\cdot)^t):M_k\rightarrow M_k$ is replaced by its adjoint $F_B(\cdot)^t:M_k\rightarrow M_k$.
\end{lemma}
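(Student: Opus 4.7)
My plan is to use the complete reducibility property of Proposition \ref{propkey} to block-diagonalize a Perron eigenvector of $T := G_B((\cdot)^t)|_{WM_kW}$, and then apply the two irreducibility hypotheses on $WM_kW$ and on the $W_iM_kW_i$ in sequence.

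By Proposition \ref{propkey}, $B = \sum_{i=1}^s B_i$ with $B_i := (W_i^t \otimes W_i) B (W_i^t \otimes W_i)$, so $T(X) = \sum_i T_i(X)$ where $T_i(X) := G_{B_i}(X^t) \in W_iM_kW_i$ for every $X \in M_k$. Lemma \ref{lemma3} applied to the irreducible restriction $T|_{WM_kW}$ produces $\gamma \in P_k \cap WM_kW$ with $\Im(\gamma) = \Im(W)$ and $T(\gamma) = \lambda\gamma$, $\lambda > 0$. Since the subspaces $\{W_iM_kW_i\}_{i=1}^s$ are mutually orthogonal in the trace inner product, the identity $\lambda\gamma = \sum_i T_i(\gamma)$ forces $\gamma \in \bigoplus_i W_iM_kW_i$, i.e., $W_i\gamma W_j = 0$ for $i \neq j$. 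Applying $T$ block-by-block then yields $T(W_i\gamma W_i) = \lambda W_i\gamma W_i$ for every $i$.

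Fix $1 \leq i \leq t$ and suppose $W_i\gamma W_i \neq 0$. Let $V'$ denote the orthogonal projection onto $\Im(W_i\gamma W_i) \subset \Im(W_i)$. Because $T$ is completely positive and $W_i\gamma W_i$ is a PSD eigenvector with positive eigenvalue, a standard argument using the CP structure of $T$ (each term $R_j (W_i\gamma W_i) R_j^*$ in the Kraus expansion of $T$ is PSD and lies in $V'M_kV'$) shows $T(V'M_kV') \subset V'M_kV'$. Irreducibility of $T|_{W_iM_kW_i}$ together with $0 \neq V' \leq W_i$ then gives $V' = W_i$, so $\Im(W_i) = \Im(W_i\gamma W_i) \subset \Im(\gamma) = \Im(W)$. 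Thus $W_iM_kW_i \subset WM_kW$ is a nontrivial $T$-invariant subspace, and the irreducibility of $T|_{WM_kW}$ forces $W_i = W$ via Definition \ref{definitionIrredPerron}. Consequently, if $W \neq W_j$ for every $1 \leq j \leq t$, then $W_i\gamma W_i = 0$ for every such $i$; since $\gamma \geq 0$ with $\Im(\gamma) = \Im(W)$, this yields $W_iW = 0$, i.e., $\Im(W) \subset \Im(Id - W_1 - \ldots - W_t)$.

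For the adjoint version, the proof of Proposition \ref{propkey} uses only that $B$ and $B^{t_2}$ are PSD together with $tr(B(W_j^t \otimes W_m)) = 0$ for $j \neq m$. When $F_B(\cdot)^t$ (rather than $G_B((\cdot)^t)$) is assumed to leave each $W_iM_kW_i$ invariant, the required vanishing follows by the analogous computation using $F_B(W_m)^t \in W_mM_kW_m$ together with the fact that $B^{t_1} = \overline{B^{t_2}}$ is also PSD; the decomposition of $B$ persists, and the argument above transfers with $T$ replaced by $F_B(\cdot)^t$. The main obstacle is the second paragraph---extracting the block-diagonal form of $\gamma$ from the block decomposition of $B$---which critically uses the mutual orthogonality of the subspaces $W_iM_kW_i$ in the trace inner product and the positivity of $\lambda$.
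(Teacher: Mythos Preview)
Your proof is correct and follows essentially the same strategy as the paper: use Proposition~\ref{propkey} to block--diagonalize a suitable PSD element of $WM_kW$, then exploit the two irreducibility hypotheses in turn. The one substantive difference is the choice of that PSD element. The paper works directly with $G_B(W^t)=T(W)$: Proposition~\ref{propkey} gives $T(W)=\sum_i W_iG_B(W_i^tW^tW_i^t)W_i$, so $T(W)$ commutes with each $W_i$, and the projection onto $\Im(W_i)\cap\Im(T(W))$ yields a $T$--invariant subalgebra sitting inside both $W_iM_kW_i$ and $WM_kW$; the double irreducibility then forces $W_i=W$. At the end the paper needs $\Im(T(W))=\Im(W)$, which follows from irreducibility of the CP map $T|_{WM_kW}$. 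You instead invoke Lemma~\ref{lemma3} to obtain the Perron eigenvector $\gamma$ with $\Im(\gamma)=\Im(W)$ built in, and then block--diagonalize $\gamma$ via $\lambda\gamma=\sum_i T_i(\gamma)\in\bigoplus_i W_iM_kW_i$; each nonzero block $W_i\gamma W_i$ is itself a PSD eigenvector, so its range gives the required invariant subalgebra. Your route trades the short final step $\Im(T(W))=\Im(W)$ for the citation of Lemma~\ref{lemma3}, but the mechanics are otherwise the same, and your handling of the adjoint case is adequate.
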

\begin{proof}
By Proposition \ref{propkey}, $G_B(W^t)=\sum_{i=1}^sW_iG_B(W_i^tW^tW_i^t)W_i.$ 
So, $W_iG_B(W^t)=G_B(W^t)W_i$, $\forall i$.\\

Since $W_i$ and $G_B(W^t)$ commute, if  $W_iG_B(W^t)\neq 0$, for some $i\leq t$,   then 
\begin{center}
$\Im(W_i)\cap\Im(G_B(W^t))\neq\{\vec{0}\}$.
\end{center}

 Let $\widetilde{W}$ be the orthogonal projection onto $\Im(W_i)\cap\Im(G_B(W^t))$.

Next,  $\widetilde{W}M_k\widetilde{W}\subset W_iM_kW_i\cap WM_kW$ is left invariant by $G_B((\cdot)^t)$ and \begin{center}
$G_B((\cdot)^t)|_{W_iM_kW_i}$, $G_B((\cdot)^t)|_{WM_kW}$ are irreducible.
\end{center}  

Therefore,  $\widetilde{W}= W_i= W$, for some $i\leq t$.

Now, if $W_iG_B(W^t)= 0$, for every $1\leq i\leq t$, then $\Im(G_B(W^t))\subset\Im(Id-W_1-\ldots-W_t)$. 

Finally, since  $G_B((\cdot)^t)|_{WM_kW}$ is irreducible then $\Im(W)=\Im(G_B(W^t))$.
\end{proof}

\vspace{0.5 cm}

\begin{lemma}\label{lemmapropertyowned} Let $T:M_k\rightarrow M_k$ be a positive map and $Q\in M_k$ an invertible matrix. Let us assume that for every orthogonal projection $V\in M_k$ such that $T(VM_kV)\subset VM_kV$, $T|_{VM_kV}$ is irreducible with spectral radius $\lambda$, there is a unique orthogonal projection $W\in M_k$ such that $T^*(WM_kW)\subset WM_kW$, $T^*|_{WM_kW}$ is irreducible with spectral radius $\lambda$, rank$(V)=$rank$(W)$ and $\ker(W)\cap\Im(V)=\{\vec{0}\}$. Then the same property is owned by $S:M_k\rightarrow M_k$, $S(X)=QT(Q^{-1}X(Q^{-1})^*)Q^*$.
\end{lemma}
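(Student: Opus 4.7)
The plan is to lift the hypothesized uniqueness property from $T$ to $S$ via a natural bijection between the orthogonal projections that leave invariant $T$ (resp.\ $T^*$) and those that leave invariant $S$ (resp.\ $S^*$). First, a direct trace computation yields
$$S^*(Y) = (Q^{-1})^* T^*(Q^* Y Q) Q^{-1},$$
so $S^*$ has exactly the same shape as $S$ with $T$ replaced by $T^*$ and $Q$ replaced by $R := (Q^*)^{-1}$. This lets me handle the domain side and the adjoint side by a single construction.

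The key observation is that for any orthogonal projection $V_1\in M_k$, if $V$ denotes the orthogonal projection onto $Q^{-1}\Im(V_1)$, then
$$Q^{-1} V_1 M_k V_1 (Q^{-1})^* = V M_k V,$$
since both sides are exactly the set of matrices whose column and row spaces lie in $Q^{-1}\Im(V_1)$. It follows immediately that $S(V_1 M_k V_1)\subset V_1 M_k V_1$ if and only if $T(V M_k V)\subset V M_k V$, and the correspondence $V_1\leftrightarrow V$ respects containment of subprojections, hence preserves irreducibility. Eigenpairs are transported by $\gamma\leftrightarrow Q\gamma Q^*$ with the same eigenvalue (and $\gamma\in P_k$ iff $Q\gamma Q^*\in P_k$), so spectral radii agree, and ranks are preserved because $Q$ is invertible. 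Applying the same argument to $S^*$ with $R$ in place of $Q$ gives the dual bijection: $W$ left-invariant by $T^*$ corresponds to $W_1$, the orthogonal projection onto $(Q^{-1})^*\Im(W) = R^{-1,*-1}\Im(W)$, which is left-invariant by $S^*$, again preserving irreducibility, spectral radius, and rank.

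Now, given $V_1$ with $S(V_1 M_k V_1)\subset V_1 M_k V_1$ and $S|_{V_1 M_k V_1}$ irreducible of spectral radius $\lambda$, the corresponding $V$ satisfies the hypothesis on $T$; this yields a unique $W$ with $T^*(W M_kW)\subset W M_kW$, $T^*|_{WM_kW}$ irreducible of spectral radius $\lambda$, $\mathrm{rank}(W)=\mathrm{rank}(V)$, and $\ker(W)\cap\Im(V)=\{\vec{0}\}$. Set $W_1$ to be the orthogonal projection onto $(Q^{-1})^*\Im(W)$. By the dual bijection, $S^*(W_1 M_k W_1)\subset W_1 M_k W_1$, $S^*|_{W_1 M_k W_1}$ is irreducible of spectral radius $\lambda$, and $\mathrm{rank}(W_1)=\mathrm{rank}(W)=\mathrm{rank}(V)=\mathrm{rank}(V_1)$. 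For the last condition, $\Im(V_1)=Q\Im(V)$, while a standard computation with an invertible matrix gives
$$\ker(W_1)=\Im(W_1)^\perp=\bigl((Q^{-1})^*\Im(W)\bigr)^\perp=Q\,\ker(W),$$
so $\ker(W_1)\cap \Im(V_1)=Q\bigl(\ker(W)\cap\Im(V)\bigr)=\{\vec{0}\}$. Uniqueness of $W_1$ is immediate: any competitor $W_1'$ corresponds under the dual bijection to some $W'$ satisfying the hypothesis-conclusion for $V$, and the uniqueness of $W$ forces $W'=W$, hence $W_1'=W_1$.

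No step looks like a serious obstacle; the only delicate point is keeping the two conjugations straight (the one by $Q$ governing invariant projections of $S$, and the one by $(Q^*)^{-1}$ governing those of $S^*$). Once the two bijections are set up in parallel, all four required properties of $W_1$ — invariance, irreducibility with the right spectral radius, matching rank, and trivial intersection — transfer by direct verification.
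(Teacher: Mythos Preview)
Your proof is correct and follows essentially the same route as the paper's: both arguments set up the correspondence $V_1\leftrightarrow V$ (orthogonal projection onto $Q^{-1}\Im(V_1)$) between $S$-invariant and $T$-invariant subalgebras, and the dual correspondence $W\leftrightarrow W_1$ (orthogonal projection onto $(Q^{-1})^*\Im(W)$) between $T^*$-invariant and $S^*$-invariant ones, then verify invariance, irreducibility, spectral radius, rank, and the kernel--image condition, with uniqueness pulled back through the bijection. Your presentation is slightly more explicit in framing the two correspondences as parallel bijections and in computing $\ker(W_1)=Q\ker(W)$ directly, but the mathematical content matches the paper's proof.
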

\begin{proof} Let $V'$ be an orthogonal projection such that $S(V'M_kV')\subset V'M_kV'$, $S|_{V'M_kV'}$ is irreducible with spectral radius $\lambda$. 

Since $S,T$ are similar maps then $T(VM_kV)\subset VM_kV$, $T|_{VM_kV}$ is irreducible with spectral radius $\lambda$, where $V$ is the orthogonal projection onto $\Im(Q^{-1}V'(Q^{-1})^*)$.\\

Let $W$ be the orthogonal projection described in the statement of this theorem and $W'$ the orthogonal projection  
onto $\Im((Q^{-1})^*WQ^{-1})$. Note that $S^*(X)=(Q^{-1})^*T^*(Q^*XQ)Q^{-1}$. \\

 Therefore,

\begin{itemize}
\item $S^*(W'M_kW')\subset W'M_kW',$
\item $S^*|_{W'M_kW'}$ is irreducible with spectral radius $\lambda$,
\item rank$(W')=$ rank$(W)=$ rank$(V)=$ rank$(V')$.\\
\end{itemize}

Furthermore, since $\ker(W)\cap\Im(V)=\ker(Q^*W'Q)\cap \Im(Q^{-1}V'(Q^{-1})^*)=\{\vec{0}\}$ then $$\ker(W')\cap \Im(V')=\{\vec{0}\}.$$

Next, let $W''$ be another orthogonal projection satisfying the same properties of $W'$. 

Since $T^*(X)=Q^*S^*((Q^{-1})^*XQ^{-1})Q$ then the orthogonal projection onto $\Im(Q^*W''Q)$ satisfies the same properties of $W$ (by the argument above). \\

By the uniqueness of $W$, we have $\Im(W)=\Im(Q^*W''Q)$. Hence, $$\Im(W'')=\Im((Q^{-1})^*W(Q^{-1}))=\Im(W').$$

 So the uniqueness of $W'$ follows.
\end{proof}

\vspace{0.5 cm}

\begin{lemma}\label{lemmaP} Let $B\in M_k\otimes M_k\simeq M_{k^2}$ be a positive semidefinite Hermitian matrix. Suppose there is $v\in\Im(B)\subset \mathbb{C}^k\otimes\mathbb{C}^k$ with tensor rank $k$. There is an invertible matrix $P\in M_k$ such that $\Im(G_A(X^t))\supset\Im(X)$ for every $X\in P_k$, where $A=(P\otimes Id)B(P\otimes Id)^*$.
\end{lemma}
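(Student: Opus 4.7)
The plan is to choose $P$ via the Choi/Jamio\l{}kowski correspondence recalled in the introduction. Since $v$ has tensor rank $k$, the matrix $F(v)\in M_k$ (with $F\colon\mathbb{C}^k\otimes\mathbb{C}^k\to M_k$ defined by $F(\sum a_i\otimes b_i)=\sum a_ib_i^t$) is invertible, so I take $P=F(v)^{-1}$. The expectation is that after this filter the rank-one piece of $B$ coming from $v$ becomes a positive multiple of the Choi matrix of the identity map; that would force $G_A(X^t)$ to dominate a positive multiple of $X$ itself, which is exactly the image containment we want.

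The first intermediate step is to convert the hypothesis $v\in\Im(B)$ into a quantitative domination $B\geq \lambda vv^*$ for some $\lambda>0$. This is standard: writing $v=B^{1/2}u$ gives $vv^*=B^{1/2}uu^*B^{1/2}\leq \|u\|^2\,B$, so any $\lambda\leq \|u\|^{-2}$ works. Conjugating by $P\otimes Id$ preserves the positive semidefinite order, so $A\geq \lambda\,v'(v')^*$, where $v'=(P\otimes Id)v$. The crucial identity is
\[
F(v')=F((P\otimes Id)v)=P\,F(v)=Id,
\]
so by the formula recalled in the introduction, $G_{v'(v')^*}(X^t)=R'X(R')^*$ with $R'=F(v')^t=Id$. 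In other words, the completely positive map associated to $v'(v')^*$ is the identity: $G_{v'(v')^*}(X^t)=X$.

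The last step is routine monotonicity. The assignment $D\mapsto G_D(X^t)$ is linear in $D$ and preserves the PSD order on $X\in P_k$, because $X^t\in P_k$ and for any $D\geq 0$ the map $G_D$ is positive. Applying this monotonicity to $A-\lambda v'(v')^*\geq 0$ and evaluating at $X^t$ yields
\[
G_A(X^t)\geq \lambda\,G_{v'(v')^*}(X^t)=\lambda X\geq 0,
\]
and since $Y\geq Z\geq 0$ in $M_k$ implies $\Im(Y)\supset \Im(Z)$ (as $\ker(Y)\subset\ker(Z)$ for Hermitian PSD matrices), we conclude $\Im(G_A(X^t))\supset \Im(X)$ for every $X\in P_k$, as desired.

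I do not expect a real obstacle here: the only insight needed is that $P=F(v)^{-1}$ is the filter that turns $v'(v')^*$ into the Choi matrix of the identity, after which the dominance $B\geq \lambda vv^*$ and the standard image/kernel relation for PSD matrices do all the work.
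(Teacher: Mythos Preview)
Your proof is correct and follows essentially the same route as the paper's. The paper chooses $P$ so that $(P\otimes Id)v=u=\sum_i e_i\otimes e_i$, which is exactly your choice $P=F(v)^{-1}$ since $F((P\otimes Id)v)=P\,F(v)$ and $F(u)=Id$; it then uses $u\in\Im(A)$ to write $A-\epsilon\,uu^t\geq 0$ and splits $G_A(X^t)=G_{A-\epsilon uu^t}(X^t)+\epsilon X$, just as you do. Your write-up is more explicit in justifying the domination $B\geq\lambda vv^*$ and the implication $Y\geq Z\geq 0\Rightarrow\Im(Y)\supset\Im(Z)$, but the underlying argument is identical.
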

\begin{proof}
Let $P\in M_k$ be an invertible matrix such that $(P\otimes Id)v=u$, where $u=\sum_{i=1}^k e_i\otimes e_i$ and $\{e_1,\ldots, e_k\}$ is the canonical basis of $\mathbb{C}^k$. Thus, $u\in\Im(A)$, where $A=(P\otimes Id)B(P\otimes Id)^*$.

There is $\epsilon>0$ such that $A-\epsilon (uu^t)$ is a positive semidefinite Hermitian matrix. Thus, $G_{A-\epsilon uu^t}(X^t)$ is a positive map.
Note that\begin{center}
 $G_A(X^t)=G_{A-\epsilon uu^t}(X^t)+\epsilon\ G_{uu^t}(X^t)$ and $G_{uu^t}(X^t)=X$.
\end{center} 

Finally, $\Im(G_A(X^t))\supset\Im(X)$ for every $X\in P_k$.
\end{proof}

\section{Main Results}

In this section we present our main result (Theorem \ref{theoremprincipal}) and one problem derived from this main result (Problem \ref{problem}). This problem can be solved as a unconstrained quadratic minimization problem (Lemma \ref{solutionW}, Remark \ref{remarkquadratic}).

\begin{theorem}\label{theoremprincipal} Let $A\in M_k\otimes M_k$ be  a PPT matrix and $T:M_k\rightarrow M_k$ be the completely positive map $G_A((\cdot)^t):M_k\rightarrow M_k$. Suppose that $\Im(T(X))\supset \Im(X)$ for every $X\in P_k$. The following statements are equivalent:
\begin{enumerate}
\item $T:M_k\rightarrow M_k$ is equivalent to a doubly stochastic map.\\

\item There are orthogonal projections $\{V_1,\ldots,V_s\}\subset M_k$ such that  \ \  $\mathbb{C}^k=\bigoplus_{i=1}^s\Im(V_i)$,\\ $T(V_iM_kV_i)\subset V_iM_kV_i$, $T|_{V_iM_kV_i}$ is irreducible for every $i$.\\

\item For every orthogonal projection $V\in M_k$ such that $T(VM_kV)\subset VM_kV$, $T|_{VM_kV}$ is irreducible with spectral radius $\lambda$, there is a unique orthogonal projection $W\in M_k$ such that $T^*(WM_kW)\subset WM_kW$, $T^*|_{WM_kW}$ is irreducible with spectral radius $\lambda$, $rank(W)= rank(V)$ and
 $\ker(W)\cap\Im(V)=\{\vec{0}\}$.

\end{enumerate}
\end{theorem}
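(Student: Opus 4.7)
My plan is to prove the cyclic chain $(1)\Rightarrow(2)\Rightarrow(3)\Rightarrow(1)$. The PPT hypothesis enters through the complete reducibility property (Proposition \ref{propkey}) and the uniqueness-of-blocks Lemma \ref{lemmakey}, which together regulate how $T$- and $T^*$-invariant irreducible projections coexist on $A$.

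For $(1)\Rightarrow(2)$, I first note that property (2) is preserved under the equivalence $T\mapsto QT(Q^{-1}(\cdot)Q^{-*})Q^*$ (invariant irreducible projections transform bijectively via $\Im(V)\mapsto Q\Im(V)$, mirroring Lemma \ref{lemmapropertyowned}), so I reduce to the case in which $T$ is already doubly stochastic. I then pick a minimal nonzero orthogonal projection $V_1$ with $T(V_1M_kV_1)\subset V_1M_kV_1$; minimality forces $T|_{V_1M_kV_1}$ to be irreducible. The trace/positivity chain $T(V_1)\leq T(Id)=Id$, $T(V_1)\in V_1M_kV_1$ and $tr(T(V_1))=tr(T^*(Id)V_1)=rank(V_1)$ forces $T(V_1)=V_1$. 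A block-form argument---using $T^*(V_1)\geq 0$, $T^*(V_1)\leq T^*(Id)=Id$, and $tr(V_1T^*(V_1))=rank(V_1)$---then yields $T^*(V_1)=V_1$ as well. Substituting these two equalities into the Kraus form $T(X)=\sum_j R_jXR_j^*$, the PSD identities $\sum V_1^\perp R_jV_1R_j^*V_1^\perp=0$ and $\sum V_1^\perp R_j^*V_1R_jV_1^\perp=0$ force every $R_j$ to be block-diagonal with respect to $\Im(V_1)\oplus\Im(V_1^\perp)$; hence $V_1^\perp M_kV_1^\perp$ is also $T$-invariant, and iteration yields the partition.

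For $(2)\Rightarrow(3)$, let $V$ satisfy the hypotheses of (3). Applying Lemma \ref{lemmakey} with the full partition $\{V_1,\ldots,V_s\}$ from (2) (taking $t=s$, so $\Im(Id-V_1-\cdots-V_s)$ is trivial) forces $V=V_j$ for some $j$. Proposition \ref{propkey} gives $A=\sum_i(V_i^t\otimes V_i)A(V_i^t\otimes V_i)$, from which every Kraus operator of $T$ is block-diagonal with respect to the $V_i$'s, so $T^*$ also preserves each $V_iM_kV_i$; Lemma \ref{lemma2}(b)-(c) then ensures $T^*|_{V_jM_kV_j}$ is irreducible with spectral radius $\lambda$. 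The choice $W:=V_j=V$ meets rank equality and transversality trivially. For uniqueness, the adjoint version of Lemma \ref{lemmakey} forces any other candidate $W'$ to equal some $V_i$; but for $i\neq j$ we have $\ker(V_i)\supset\Im(V_j)=\Im(V)$, contradicting the transversality clause.

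For $(3)\Rightarrow(1)$, the hardest step, condition (3) furnishes for each minimal $T$-invariant $V_1$ a unique $W_1$ with $\mathbb{C}^k=\Im(V_1)\oplus\ker(W_1)$ (by rank equality plus transversality). I iterate (3) to produce a family of pairs $(V_i,W_i)$ and then construct an invertible $Q\in M_k$ that simultaneously straightens each non-orthogonal direct sum $\Im(V_i)\oplus\ker(W_i)$ into an orthogonal direct sum. The equivalent map $S(X)=QT(Q^{-1}XQ^{-*})Q^*$ then has each $V_i$ and $W_i$ collapse to the same orthogonal projection, so the transformed projections partition $Id$, with both $S$ and $S^*$ preserving each block; irreducibility on each block combined with the full-rank Perron eigenvectors guaranteed by $\Im(T(X))\supset\Im(X)$ allows a final diagonal rescaling that makes $S$ doubly stochastic. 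The main obstacle is precisely the construction of $Q$ harmonizing all pairs $(V_i,W_i)$ at once, together with the verification that iterating (3) really does yield a family spanning all of $\mathbb{C}^k$; the uniqueness clause of (3)---not mere existence---is what rules out ``accidental'' alternatives $W_i$ and enables this simultaneous alignment.
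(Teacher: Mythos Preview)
Your argument for $(2)\Rightarrow(3)$ has a genuine gap: you apply Proposition~\ref{propkey} and Lemma~\ref{lemmakey} directly to the family $\{V_1,\ldots,V_s\}$ from~(2), writing ``$\Im(Id-V_1-\cdots-V_s)$ is trivial'' and ``$A=\sum_i(V_i^t\otimes V_i)A(V_i^t\otimes V_i)$''. Both of these require $V_iV_j=0$ for $i\neq j$, but condition~(2) only asserts that $\mathbb{C}^k=\bigoplus_i\Im(V_i)$ as a \emph{vector space} direct sum, not an orthogonal one; in general $\sum_iV_i\neq Id$ and the hypotheses of Proposition~\ref{propkey} fail. (Your $(1)\Rightarrow(2)$ does produce mutually orthogonal $V_i$'s, so the composite $(1)\Rightarrow(3)$ survives, but $(2)$ as stated is never shown to imply anything.) The paper repairs this by first choosing an invertible $P$ with $\Im(PV_iP^*)\perp\Im(PV_jP^*)$, passing to $B=((P^{-1})^t\otimes P)A((P^{-1})^t\otimes P)^*$, and only then invoking Lemma~\ref{lemmakey}; one consequence is that the unique $W$ is \emph{not} $V$ itself but the projection onto $\Im(P^*PVP^*P)$.

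Your sketch of $(3)\Rightarrow(1)$ is also incomplete at the point you flag yourself. Saying ``iterate~(3) to produce a family of pairs $(V_i,W_i)$ and then construct $Q$ harmonizing all pairs at once'' hides the real difficulty: after obtaining $(V_1,W_1)$ for $T$, there is no obvious $T$-invariant complement in which to locate $V_2$, and nothing forces a $W_2$ obtained from~(3) applied to $T$ to be compatible with $W_1$. The paper avoids a global $Q$ entirely: it conjugates by $Q_1$ immediately after finding $(V_1,W_1)$ so that the new map $T_1$ leaves both $V_1M_kV_1$ and $(Id-V_1)M_k(Id-V_1)$ invariant, then finds $V_2$ inside $(Id-V_1)M_k(Id-V_1)$, and uses Lemma~\ref{lemmakey} (on the PPT matrix $A_1$, with $t=1$) to force $\Im(W_2)\subset\Im(Id-V_1)$ before conjugating again. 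The step-by-step conjugation, together with Lemma~\ref{lemmapropertyowned} to transport property~(3) to each $T_i$, is what makes the iteration close.
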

\begin{proof}
$(1\Leftrightarrow 2)\ $The existence of the orthogonal projections described in item 2 and the condition $\Im(T(X))\supset \Im(X)$ for every $X\in P_k$ imply that $T:M_k\rightarrow M_k$ has a positive achievable capacity. Then it is equivalent to a doubly stochastic map (See \cite{gurvits2004}). Now, if a positive map $T:M_k\rightarrow M_k$ is equivalent to a doubly stochastic map and $\Im(T(X))\supset \Im(X)$ for every $X\in P_k$ then we can easily find orthogonal projections as described in  item 2. So the first two conditions are equivalent. Check also \cite[Theorem 3.4]{cariellosink} for a different approach  based on Sinkhorn and Knopp original proof.\\\\
$(2\Rightarrow 3)\ $
There is an invertible matrix  $P\in M_k$ such that $\Im(PV_iP^*)\perp\Im(PV_jP^*)\ (i\neq j),$ since $\mathbb{C}^k=\bigoplus_{i=1}^s\Im(V_i)$.

Let $W_i$ be the orthogonal projection onto $\Im(PV_iP^*)$. Hence, $\sum_{i=1}^sW_i=Id$, $W_iW_j=0$ $(i\neq j)$.\\

Define $B=((P^{-1})^t\otimes P)A((P^{-1})^t\otimes P)^*$. Note that $B$ is PPT and $$G_B(X^t)=P(T(P^{-1}X(P^{-1})^*))P^*\text{ and } (F_B(X))^t=(G_B(X^t))^*=(P^{-1})^*T^*(P^*XP)P^{-1}$$ 

Thus, the hypotheses of Lemma \ref{lemmakey} hold for this $B$. Moreover, it follows from the hypothesis of item 2 that
\begin{center}
$G_B((\cdot)^t)|_{W_iM_kW_i}$ is irreducible, $1\leq i\leq s$, and $\Im(G_B(X^t))\supset\Im(X)$ for every $X\in P_k$.

\end{center}

Next, if $T(VM_kV)\subset VM_kV$ and $T|_{VM_kV}$ is irreducible then $G_B((\widetilde{V}M_k\widetilde{V})^t)\subset \widetilde{V}M_k\widetilde{V}$ and $G_B((\cdot)^t)|_{\widetilde{V}M_k\widetilde{V}}$ is irreducible, where $\widetilde{V}$ is the orthogonal projection onto $\Im(PVP^*)$. 

By Lemma \ref{lemmakey}, $\widetilde{V}=W_i$ for some $i$. Hence, \begin{center}
$\Im(PVP^*)=\Im(PV_iP^*)$ and $V=V_i$.
\end{center}

Analogously, if $T^*(WM_kW)\subset WM_kW$ and $T^*|_{WM_kW}$ is irreducible then, by Lemma \ref{lemmakey}, \begin{center}
$\Im((P^{-1})^*WP^{-1})=\Im(W_j)$ for some $j$.
\end{center} 

Note that $\ker(W)\cap\Im(V)=\{\vec{0}\}$ if and only if $\ker(W_j)\cap\Im(PV_iP^*)=\{\vec{0}\}$. \\

Since $\Im(PV_iP^*)=\Im(W_i)$ and $W_lW_m=0$ $(l\neq m)$ then $\ker(W)\cap\Im(V_i)=\{\vec{0}\}$ if and only if  $W$ is the orthogonal projection onto $\Im(P^*W_iP)$ $($i.e. $j=i)$. Thus, there is a unique $W$ such that \begin{center}
$T^*(WM_kW)\subset WM_kW$, $T^*|_{WM_kW}$ is irreducible and $\ker(W)\cap\Im(V)=\{\vec{0}\}$.
\end{center}

Recall that, $\Im(W)=\Im(P^*W_iP)=\Im(P^*PVP^*P)$. So rank$(W)=$ rank$(V)$.\\

Next, $T|_{VM_kV}$ and $G_B((\cdot)^t)|_{W_iM_kW_i}$ have the same spectral radius, since they are similar and 
the same is valid for $T^*|_{WM_kW}$ and $(F_B(\cdot))^t|_{W_iM_kW_i}$. \\

Since  $W_lW_m=0$ for every $l\neq m$ then \begin{center}
$(G_B((\cdot)^t)|_{W_iM_kW_i})^*=(F_B(\cdot))^t|_{W_iM_kW_i}$.
\end{center} 

Thus, $T|_{VM_kV}$ and $T^*|_{WM_kW}$ have also the same spectral radius.\\\\
$(3\Rightarrow 2)\ $ Let $V_1\in M_k$ be an orthogonal projection such that\begin{center}
 $T(V_1M_kV_1)\subset V_1M_kV_1$, $T|_{V_1M_kV_1}$ is irreducible. 
\end{center}

If $V_1=Id$ then the proof is complete. 

If $V_1\neq Id$ then, by hypothesis of item 3, there is an orthogonal projection $W_1$ such that 

\begin{itemize}
\item $T^*(W_1M_kW_1)\subset W_1M_kW_1$,
\item $T^*|_{W_1M_kW_1}$ is irreducible,
\item rank$(W_1)=\ $rank$(V_1)$ and $\ker(W_1)\cap\Im(V_1)=\{\vec{0}\}$.
\end{itemize}

Thus, $\Im(V_1)\oplus\Im(Id-W_1)=\mathbb{C}^k$ and  \begin{center}
$T((Id-W_1)M_k(Id-W_1))\subset (Id-W_1)M_k(Id-W_1)$,
\end{center} by item $a)$  of Lemma \ref{lemma2}.

Let $Q_1$ be an invertible matrix such that \begin{center}
$Q_1V_1=V_1$ and $\Im(Q_1(Id-W_1))=\Im(Id-V_1)$.
\end{center}

Define \begin{center}
$A_1=((Q_1^{-1})^t\otimes Q_1)A((Q_1^{-1})^t\otimes Q_1)^*$ and $T_1(X)=G_{A_1}(X^t)=Q_1T(Q_1^{-1}X(Q_1^{-1})^*)Q_1^*$.
\end{center}

Note that 
$T_1(V_1M_kV_1)\subset V_1M_kV_1$, $T_1|_{V_1M_kV_1}$ is irreducible, \begin{center}
$T_1((Id-V_1)M_k(Id-V_1))\subset (Id-V_1)M_k(Id-V_1)$
\end{center}
 and the same occurs if we replace $T_1$ by $T_1^*$.
 
 Now, let $V_2$ be an orthogonal projection such that \begin{center}
$T_1(V_2M_kV_2)\subset V_2M_kV_2 \subset  (Id-V_1)M_k(Id-V_1)$ and $T_1|_{V_2M_kV_2}$ is irreducible.
\end{center}

If $V_2=Id-V_1$ then $T_1$ satisfies the conditions of item 2. Hence, the same conditions hold for $T$ and the proof is complete. 

Next, assume that $V_2\neq Id-V_1$. By Lemma \ref{lemmapropertyowned}, the property owned by $T$ described in item 3 of the statement of this theorem is also owned by $T_1$. Hence, there is an orthogonal projection $W_2$ such that \begin{itemize}
\item $T_1^*(W_2M_kW_2)\subset W_2M_kW_2$, \item $T_1^*|_{W_2M_kW_2}$ is irreducible, \item rank$(W_2)=$ rank$(V_2)$ and $\ker(W_2)\cap\Im(V_2)=\{\vec{0}\}$.
\end{itemize}

Moreover, since $A_1$ satisfies the hypotheses of Lemma \ref{lemmakey}  with $t=1$ and $s=2$ then \begin{center}
$W_2=V_1$ or $\Im(W_2)\subset \Im(Id-V_1)$. 
\end{center}

However,  $W_2=V_1$ is not possible, since $\Im(V_2)=\ker(V_1)\cap \Im(V_2)$ and $\ker(W_2)\cap\Im(V_2)=\{\vec{0}\}$. 

Therefore, $\Im(W_2)\subset \Im(Id-V_1)$. 

Let $Q_2$ be an invertible matrix such that \begin{center}
$Q_2(V_1+V_2)=V_1+V_2$ and $\Im(Q_2(Id-V_1-W_2))=\Im(Id-V_1-V_2)$.
\end{center}

Define \begin{center}
$A_2=((Q_2^{-1})^t\otimes Q_2)A_1((Q_2^{-1})^t\otimes Q_2)^*$ and $T_2(X)=G_{A_2}(X^t)=Q_2T_1(Q_2^{-1}X(Q_2^{-1})^*)Q_2^*$.
\end{center}

Note that $T_2(V_iM_kV_i)\subset V_iM_kV_i$, $T_2|_{V_iM_kV_i}$ is irreducible for $1\leq i\leq 2$
 and  
 
 \begin{center}
 $T_2((Id-V_1-V_2)M_k(Id-V_1-V_2))\subset (Id-V_1-V_2)M_k(Id-V_1-V_2)$.
\end{center}

Repeating this argument $s$ times $(s\leq k)$, we obtain 
 $T_s(V_iM_kV_i)\subset V_iM_kV_i$, 
 $T_s|_{V_iM_kV_i}$ is irreducible
 for every $1\leq i\leq s$, $V_iV_j=0$ for $i\neq j$ and $\sum_{i=1}^sV_i=Id$, where $T_s(X)=RT(R^{-1}X(R^{-1})^*)R^*$ for some invertible $R\in M_k$. 
\end{proof}

\vspace{0.5 cm}

This last theorem gives rise to the following problem. We present a solution for this problem in the next two lemmas (\ref{lemmaQ}, \ref{solutionW}). 

\vspace{0.5 cm}

\begin{problem}\label{problem} Given a completely positive map $T:M_k\rightarrow M_k$ and an orthogonal projection $V\in M_k$ such that 
$T(VM_kV)\subset VM_kV$  and $T|_{VM_kV}$ is irreducible,
find
 an orthogonal projection $W$ such that 
\begin{itemize}
\item $T^*(WM_kW)\subset WM_kW$,
\item $T^*|_{WM_kW}$ is irreducible,
\item $rank(W)= rank(V)$ and 
\item $\ker(W)\cap\Im(V)=\{\vec{0}\}$.
\end{itemize}

We shall call such $W$ a solution to the Problem \ref{problem} subjected to $T$ and $V$.
\end{problem}

\vspace{0.5 cm}

The next lemma simplifies our search for $W$ of problem \ref{problem} and relates  it  to a solution of a  unconstrained quadratic minimization problem (lemma \ref{solutionW}). Recall that item 3 of  theorem \ref{theoremprincipal} requires the uniqueness of  this solution. Thus, we shall solve this minimization problem only when the uniqueness of the solution is granted (see remark \ref{remarkquadratic}). 
\vspace{0.5 cm}

\begin{lemma}\label{lemmaQ} Let $T:M_k\rightarrow M_k$ and $V\in M_k$ be as in Problem \ref{problem}.
Let $\lambda$ be the spectral radius  of $T|_{VM_kV}$. There is an invertible matrix $Q\in M_k$ such that a solution to the Problem \ref{problem} subjected to $T$ and $V$ is the orthogonal projection onto $\Im(Q^*W_1Q)$, where $W_1$ is a solution to the Problem \ref{problem} subjected to\\
\begin{enumerate}
\item $T_1:M_k\rightarrow M_k,\ T_1(x)=\frac{1}{\lambda}QT(Q^{-1}X(Q^{-1})^*)Q^*$,
\item $V_1=\begin{pmatrix}Id_{s\times s} & 0_{s\times k-s} \\ 
0_{k-s\times s} & 0_{k-s\times k-s}\end{pmatrix}$, where $s= rank(V)$, \\
\item $V_1T_1^*(V_1)V_1=V_1$.
\end{enumerate}
\end{lemma}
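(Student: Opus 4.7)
The plan is to construct $Q$ explicitly using the Perron eigenvector of $VT^*(\cdot)V$ on $VM_kV$. Since $T|_{VM_kV}$ is irreducible with spectral radius $\lambda$, Lemma \ref{lemma3} furnishes $\delta\in VM_kV\cap P_k$ with $\Im(\delta)=\Im(V)$ and $VT^*(\delta)V=\lambda\delta$. Setting $s=\text{rank}(V)$, diagonalize $\delta|_{\Im(V)}$ by orthonormal eigenvectors $u_1,\ldots,u_s$ with positive eigenvalues $\mu_1^2,\ldots,\mu_s^2$, and complete by an orthonormal basis $u_{s+1},\ldots,u_k$ of $\ker(V)$. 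Let $U$ be the unitary with $Ue_i=u_i$, so $UV_1U^*=V$, and define $Q=D_1U^*$ with $D_1=\text{diag}(\mu_1,\ldots,\mu_s,1,\ldots,1)$. Since $D_1$ commutes with $V_1$, one verifies $QV=V_1Q$ (equivalently $V_1(Q^{-1})^*=(Q^{-1})^*V$), and a short computation yields the crucial identity $Q^*V_1Q=\delta$.

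Next I would verify the three conditions of the lemma for this $Q$. Condition (1) $T_1(V_1M_kV_1)\subset V_1M_kV_1$ follows from $Q^{-1}V_1M_kV_1(Q^{-1})^*=VM_kV$ combined with $T(VM_kV)\subset VM_kV$. Condition (2), irreducibility of $T_1|_{V_1M_kV_1}$, is preserved under the similarity $X\mapsto Q^{-1}X(Q^{-1})^*$. For condition (3), the relations $V_1(Q^{-1})^*=(Q^{-1})^*V$ and $Q^*V_1Q=\delta$ give
$$V_1T_1^*(V_1)V_1=\tfrac{1}{\lambda}(Q^{-1})^*VT^*(\delta)VQ^{-1}=(Q^{-1})^*\delta Q^{-1}=V_1.$$

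Finally, for the correspondence between solutions, the identity $T^*(Q^*YQ)=\lambda Q^*T_1^*(Y)Q$ intertwines $T^*$ with $\lambda T_1^*$ via the isomorphism $\Psi(Y)=Q^*YQ$. Given a solution $W_1$ for Problem \ref{problem} subjected to $(T_1,V_1)$, let $W$ be the orthogonal projection onto $\Im(Q^*W_1Q)=Q^*\Im(W_1)$. Writing $P=Q^*W_1Q$ and $W=PP^+$, one checks $WM_kW=PM_kP=Q^*W_1M_kW_1Q=\Psi(W_1M_kW_1)$; the intertwining then transfers $T_1^*$-invariance and irreducibility to $T^*$-invariance and irreducibility on $WM_kW$. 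Moreover $\text{rank}(W)=\text{rank}(W_1)=s=\text{rank}(V)$, and $\ker(W)=Q^{-1}\ker(W_1)$ together with $\Im(V)=Q^{-1}\Im(V_1)$ yield $\ker(W)\cap\Im(V)=Q^{-1}(\ker(W_1)\cap\Im(V_1))=\{\vec{0}\}$. The main subtlety is choosing $Q$ so that both the basis reshaping $V\mapsto V_1$ and the Perron normalization $V_1T_1^*(V_1)V_1=V_1$ are simultaneously achieved; once $D_1$ is calibrated by the eigenvalues of $\delta$, the remaining verifications reduce to bookkeeping with $QV=V_1Q$ and $Q^*V_1Q=\delta$.
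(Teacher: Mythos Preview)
Your proof is correct and follows essentially the same approach as the paper. The paper writes $Q=U^*R$ with $R=\delta^{1/2}+V^{\perp}$ and an arbitrary unitary $U$ satisfying $U^*VU=V_1$, passing through an intermediate map $S(X)=\frac{1}{\lambda}RT(R^{-1}XR^{-1})R$; your choice $Q=D_1U^*$, with $U$ built from an eigenbasis of $\delta$ and $D_1$ the diagonal of square-root eigenvalues, is exactly the same matrix (since in that basis $R=UD_1U^*$), and your verification simply merges the paper's two conjugation steps into one via the identities $QV=V_1Q$ and $Q^*V_1Q=\delta$.
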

\begin{proof} By Lemma \ref{lemma2}, item b), $VT^*(\cdot)V:VM_kV\rightarrow VM_kV$ is irreducible. \\

By Lemma \ref{lemma3}, there is 
$\delta_1\in P_k\cap VM_kV$ such that $VT^*(\delta_1)V=\lambda\delta_1$ and $\Im(\delta_1)=\Im(V)$.\\

Define $R=\delta_1^{\frac{1}{2}}+V^{\perp}$, where $V^{\perp}=Id-V$. Note that \begin{center}
$R^*=R$, $RVR=\delta_1$ and $VR^{-1}=R^{-1}V$.
\end{center}

Now, consider $S:M_k\rightarrow M_k$ defined by $S(X)=\frac{1}{\lambda}RT(R^{-1}X(R^{-1})^*)R^*.$\\

Since $S^*(X)=\frac{1}{\lambda}(R^{-1})^*T^*(R^*XR)R^{-1}$ then $VS^*(V)V=$ 

$$=\frac{1}{\lambda}V(R^{-1})T^*(RVR)R^{-1}V=\frac{1}{\lambda}R^{-1}VT^*(\delta_1)VR^{-1}=\frac{\lambda}{\lambda}R^{-1}\delta_1R^{-1}=V.$$

Note that  $S(VM_kV)\subset VM_kV$ and $S|_{VM_kV}$ is irreducible, since $R^{-1}V=VR^{-1}$ and $T|_{VM_kV}$ is irreducible. 
Therefore, by Lemma \ref{lemma2},

\begin{center}
$S^*(V^{\perp}M_kV^{\perp})\subset V^{\perp}M_kV^{\perp}$ and $VS^*(\cdot)V|_{VM_kV}$ is irreducible.
\end{center}

Next, let $U$ be a unitary matrix such that $U^*VU=\begin{pmatrix}Id_{s\times s}& 0 \\ 
0& 0\end{pmatrix}$. 

Let $V_1=U^*VU$ and define $T_1:M_k\rightarrow M_k$ as $T_1(X)=U^*S(UXU^*)U$. \\

Since $T_1^*(X)=U^*S^*(UXU^*)U$ and $UV_1=VU$ then $V_1T^*_1(V_1)V_1=$

\begin{center}
$=V_1U^*S^*(UV_1U^*)UV_1=U^*VS^*(UV_1U^*)VU=U^*VS^*(V)VU=U^*VU=V_1.$
\end{center}

Thus, $V_1T_1^*(V_1M_kV_1)V_1\subset V_1M_kV_1$. Moreover,   $V_1T_1^*(\cdot)V_1:V_1M_kV_1\rightarrow V_1M_kV_1$ is irreducible, since $VS^*(\cdot)V|_{VM_kV}$ is irreducible and $V_1U^*=U^*V$.  \\

Next,
 $T_1^*(V_1^{\perp}M_kV_1^{\perp})\subset U^*S^*(UV_1^{\perp}M_kV_1^{\perp}U^*)U=$\begin{center}
$U^*S^*(V^{\perp}M_kV^{\perp})U\subset U^*V^{\perp}M_kV^{\perp}U= V_1^{\perp}M_kV_1^{\perp}$.
\end{center}

Hence, $T_1(V_1M_kV_1)\subset V_1M_kV_1$, by Lemma \ref{lemma2}. Since  $V_1T_1^*(\cdot)V_1|_{V_1M_kV_1}$ is irreducible then $T_1|_{V_1M_kV_1}$ is irreducible, by Lemma \ref{lemma2}.\\

Recall that $V_1T_1^*(V_1)V_1=V_1$ then, by Lemma \ref{lemma2},  the spectral radius of $V_1T_1^*(\cdot)V_1|_{V_1M_kV_1}$ is $1$. Thus, the spectral radius of its adjoint $T_1|_{V_1M_kV_1}$ is also $1$.\\

Now, let $W_1$ be a solution to the Problem \ref{problem} subjected to $T_1:M_k\rightarrow M_k$ and $V_1$. Thus,
\begin{itemize}
\item  rank$(W_1)=$ rank$(V_1)$,
\item  $\Im(V_1)\cap\ker(W_1)=\{\vec{0}\}$
\item $T^*_1(W_1M_kW_1)\subset W_1M_kW_1$
\item $T_1^*|_{W_1M_kW_1}$ is irreducible.
\end{itemize}

Next, define $Q=U^*R$. Note that $T_1(X)=\frac{1}{\lambda}QT(Q^{-1}X(Q^{-1})^*)Q^*$, \begin{center}
$T(X)=\lambda Q^{-1}T_1^*(QXQ^*)(Q^{-1})^*$ and $T^*(X)=\lambda Q^*T_1^*((Q^{-1})^*XQ^{-1})Q$.
\end{center}

Let $W$ be the orthogonal projection onto $\Im(Q^*W_1Q)$. Note that $T^*(WM_kW)\subset WM_kW$.\\

Now, since 
$WM_kW=Q^*W_1M_kW_1Q$ and $T_1^*|_{W_1M_kW_1}$ is irreducible then \begin{center}
$T^*(WM_kW)\subset WM_kW$, $T^*|_{WM_kW}$ is irreducible.
\end{center}

Since $\Im(V_1)=\Im(U^*\delta_1U)=\Im(U^*RVR^*U)=Im(QVQ^*)$ then $\Im(V)=\Im(Q^{-1}V_1(Q^{-1})^*)$.\\

Moreover, $\ker(W)=\ker(Q^*W_1Q)$.
Thus, $\ker(W)\cap \Im(V)=\{\vec{0}\}$, since $\ker(W_1)\cap \Im(V_1)=\{\vec{0}\}$.\\

Finally, rank$(W)=$ rank$(W_1)=$ rank$(V_1)=$ rank$(V).$
\end{proof}

\vspace{0.5 cm}

\begin{lemma}\label{solutionW}  Let $T_1:M_k\rightarrow M_k$ be a completely positive map such that $T_1(V_1M_kV_1)\subset V_1M_kV_1$, $T_1|_{V_1M_kV_1}$ is irreducible and $V_1T_1^*(V_1)V_1=V_1$, where $V_1=\begin{pmatrix}Id_{s\times s} & 0_{s\times k-s} \\ 
0_{k-s\times s} & 0_{k-s\times k-s}\end{pmatrix}$. The following statements are equivalent:
\begin{itemize}
\item[a)] $W$ is a solution to the Problem \ref{problem} subjected to these $T_1$ and $V_1$.
\item[b)] $W$ is an orthogonal projection onto $\Im\begin{pmatrix}Id_{s\times s}\\ 
S_{k-s\times s} \end{pmatrix}$, where $S\in M_{k-s\times s}(\mathbb{C})$\\ is a zero of
$f:M_{k-s\times s}(\mathbb{C})\rightarrow \mathbb{R}^+\cup\{0\}$, defined by \\

$f(X)=tr\left(T_1^*\left(\begin{pmatrix}Id& 0 \\ 
0& 0\end{pmatrix}\right)\begin{pmatrix}0 & 0 \\ 
0 & Id\end{pmatrix}\right)+tr\left(T_1^*\left(\begin{pmatrix}Id& 0 \\ 
0& 0\end{pmatrix}\right)\begin{pmatrix}X^*X & -X^* \\ 
-X & 0\end{pmatrix}\right)\\ $

$ +tr\left(T_1^*\left(\begin{pmatrix}0& X^* \\
X& XX^*\end{pmatrix}\right)\begin{pmatrix}0 & 0 \\ 
0 & Id\end{pmatrix}\right)+tr\left(T_1^*\left(\begin{pmatrix}0& X^* \\
X& 0\end{pmatrix}\right)\begin{pmatrix}0 & -X^* \\ 
-X & 0\end{pmatrix}\right).\\
$

\end{itemize}
\end{lemma}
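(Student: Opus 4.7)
The plan is to prove $(a)\Leftrightarrow(b)$ in three stages: parametrize the candidate projections, identify $f(X)=0$ with the subalgebra invariance, and finally derive the remaining irreducibility condition demanded by Problem~\ref{problem}.

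First, I would observe that the rank and transversality conditions $rank(W)=s$ and $\ker(W)\cap\Im(V_1)=\{\vec{0}\}$ are equivalent to $\Im(W)=\Im(M)$ for a unique $S\in M_{k-s\times s}(\mathbb{C})$ with $M=\begin{pmatrix}Id\\ S\end{pmatrix}$: the two hypotheses give $\mathbb{C}^k=\ker(W)\oplus\Im(V_1)$, which dualizes under orthogonal complementation to $\mathbb{C}^k=\Im(W)\oplus\ker(V_1)$; this makes $V_1|_{\Im(W)}$ an isomorphism and forces $\Im(W)$ to be the graph of a linear map $S$. One then has $\Im(I-W)=\Im(N)$ with $N=\begin{pmatrix}-S^*\\ Id\end{pmatrix}$. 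Since $T_1^*$ is positive, the invariance $T_1^*(WM_kW)\subset WM_kW$ is equivalent to $(I-W)T_1^*(W)(I-W)=0$, and so to $tr(T_1^*(W)(I-W))=0$. Writing $T_1(Y)=\sum_i C_iYC_i^*$ and $W=M(M^*M)^{-1}M^*$, $I-W=N(N^*N)^{-1}N^*$, this trace equals $\sum_i tr\big((M^*M)^{-1}K_i(N^*N)^{-1}K_i^*\big)$ with $K_i=M^*C_iN$, a sum of non-negative quantities vanishing iff every $K_i=0$. It therefore shares its zero set with the natural candidate $g(X):=tr(T_1^*(MM^*)NN^*)=\sum_i\|M^*C_iN\|_F^2$.

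Second, I would show that the $f(X)$ stated in the lemma is actually equal to $g(X)$. Writing $MM^*=V_1+A_3$ and $NN^*=V_1^\perp+B_2$ and expanding, three of the four terms of $g(X)$ coincide with the first three terms of $f(X)$, and the remaining task is to prove $tr(T_1^*(A_3)B_2)=tr(T_1^*(A_4)B_4)$. Two Kraus-level identities drive this. The invariance $T_1(V_1M_kV_1)\subset V_1M_kV_1$ combined with complete positivity (via the same PSD-sum-vanishing trick used throughout the preliminary lemmas) forces every $C_i$ to be block upper triangular, i.e.\ $C_i^{21}=0$; hence $T_1^*(V_1^\perp M_kV_1^\perp)\subset V_1^\perp M_kV_1^\perp$, which kills $tr(T_1^*(A_3-A_4)B_4)$ because $B_4$ is off-diagonal. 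The hypothesis $V_1T_1^*(V_1)V_1=V_1$ becomes $\sum_i(C_i^{11})^*C_i^{11}=Id_s$ and yields $V_1T_1^*(MM^*)V_1=V_1$, hence $V_1T_1^*(A_3)V_1=0$; this kills $tr(T_1^*(A_3)(B_2-B_4))$ since $B_2-B_4\in V_1M_kV_1$. Therefore $f(X)=g(X)$, and $f(X)=0$ characterizes exactly the invariance $T_1^*(WM_kW)\subset WM_kW$.

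Third, for $(b)\Rightarrow(a)$ I still need to derive that $T_1^*|_{WM_kW}$ is irreducible when $f(X)=0$. The relation $C_i^{12}=C_i^{11}S^*-S^*C_i^{22}$ (equivalent to $K_i=0$) produces the key identity $M^*C_iM=C_i^{11}(M^*M)$, so through the linear bijection $\phi:M_s\to WM_kW$, $\phi(Y)=MYM^*$, the conjugate $\phi^{-1}T_1^*\phi$ equals $\tilde T_1(Y):=\sum_i(C_i^{11})^*YC_i^{11}$ on $M_s$. The same $\tilde T_1$ appears under the $*$-isomorphism $\phi_0:M_s\to V_1M_kV_1$, $\phi_0(Y)=\begin{pmatrix}Y&0\\ 0&0\end{pmatrix}$, as the restriction $V_1T_1^*(\cdot)V_1|_{V_1M_kV_1}$, which is irreducible by Lemma~\ref{lemma2}(b). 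Since $\phi$ is only a linear bijection (not a $*$-isomorphism), I would verify irreducibility of $T_1^*|_{WM_kW}$ by checking Lemma~\ref{lemma3} directly: pushing $\tilde T_1(Id_s)=Id_s$ through $\phi$ produces the Perron eigenvector $\gamma=MM^*$ of $T_1^*|_{WM_kW}$ with full image $\Im(W)$, and pushing a Perron eigenvector $\eta$ of $\tilde T_1^*$ on $M_s$ through $(\phi^*)^{-1}$ produces the Perron $\delta=M(M^*M)^{-1}\eta(M^*M)^{-1}M^*$ of the adjoint $WT_1(\cdot)W|_{WM_kW}$, again with full image; the required geometric multiplicities equal $1$ by linear similarity to the multiplicities of $\tilde T_1$ and $\tilde T_1^*$. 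The main obstacle is the identification $f(X)=g(X)$ in the second step, since it is precisely what reduces an apparently quartic condition to a quadratic one and requires deploying both structural hypotheses of the lemma in a coordinated way.
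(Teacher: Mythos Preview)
Your proof is correct, and it diverges from the paper's argument in two interesting places.

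For $(a)\Rightarrow(b)$, the paper does not parametrize $W$ a priori. Instead it takes the Perron eigenvector $\delta$ of $T_1^*|_{WM_kW}$, uses the invariance $T_1^*(V_1^{\perp}M_k+M_kV_1^{\perp})\subset V_1^{\perp}M_k+M_kV_1^{\perp}$ to obtain $V_1T_1^*(V_1\delta V_1)V_1=\lambda\,V_1\delta V_1$, and then invokes irreducibility and the normalization $V_1T_1^*(V_1)V_1=V_1$ to force $\lambda=1$ and $V_1\delta V_1=\mu V_1$. This yields $\delta/\mu=\begin{pmatrix}Id&S^*\\ S&SS^*\end{pmatrix}$ directly, and $f(S)=0$ follows from the fixed-point equation $T_1^*(\delta)=\delta$. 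Your route---graph parametrization from the rank/transversality hypotheses, then reading $f(S)=0$ off from the invariance alone---is cleaner in that it never touches the irreducibility hypothesis on $T_1^*|_{WM_kW}$, which is indeed superfluous in this direction.

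For the irreducibility part of $(b)\Rightarrow(a)$, the paper argues by contradiction: a proper invariant corner would produce a low-rank Perron eigenvector $\delta_1$ of $T_1^*|_{WM_kW}$; projecting gives $V_1T_1^*(V_1\delta_1V_1)V_1=\alpha\,V_1\delta_1V_1$ with $0\neq V_1\delta_1V_1$ of rank strictly below $s$, contradicting irreducibility of $V_1T_1^*(\cdot)V_1|_{V_1M_kV_1}$. Your structural route---deriving the Kraus identity $C_i^*M=M(C_i^{11})^*$ from $K_i=0$, so that $\phi(Y)=MYM^*$ conjugates $T_1^*|_{WM_kW}$ to $\tilde T_1(Y)=\sum_i(C_i^{11})^*YC_i^{11}$, and then verifying the hypotheses of Lemma~\ref{lemma3}---is more explicit and yields the full spectral correspondence (eigenvalues, multiplicities, Perron eigenvectors on both sides) as a byproduct. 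The paper's contradiction is shorter and never needs to name a Kraus representation, but your conjugacy makes transparent \emph{why} the two corners $WM_kW$ and $V_1M_kV_1$ behave identically under $T_1^*$.

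The identification $f(X)=g(X)$ (your second stage) is exactly what the paper does at the start of its proof, phrased there via Lemmas~\ref{lemma1} and~\ref{lemma2} rather than via the block form $C_i^{21}=0$; the two derivations are equivalent.
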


\begin{proof} First, define $f:M_{k-s\times s}(\mathbb{C})\rightarrow \mathbb{R}^+\cup\{0\}$ as $$f(X)=tr\left(T_1^*\left(\begin{pmatrix}Id& X^* \\ 
X& XX^*\end{pmatrix}\right)\begin{pmatrix}X^*X & -X^* \\ 
-X & Id\end{pmatrix}\right).$$

Now, by Lemmas \ref{lemma1} and \ref{lemma2},  since $T_1(V_1M_kV_1)\subset V_1M_kV_1$
then 
\begin{center}
$T_1^*(V_1^{\perp}M_kV_1^{\perp})\subset V_1^{\perp}M_kV_1^{\perp}$ and $T_1^*(V_1^{\perp}M_k+M_kV_1^{\perp})\subset V_1^{\perp}M_k+M_kV_1^{\perp}.$

\end{center}

Therefore,
\begin{center}
$T_1^*\left(\begin{pmatrix}0 & 0 \\ 
0& XX^*\end{pmatrix}\right)=\begin{pmatrix}0 & 0 \\ 
0& B\end{pmatrix}$ and $T_1^*\left(\begin{pmatrix}0 & X^* \\ 
X& 0\end{pmatrix}\right)=\begin{pmatrix}0 & Y^* \\ 
Y& D\end{pmatrix}$.

\end{center}

Hence,
 $$tr\left(T_1^*\left(\begin{pmatrix}0& X^* \\
X& XX^*\end{pmatrix}\right)\begin{pmatrix}X^*X & -X^* \\ 
-X & 0\end{pmatrix}\right)=tr\left(\begin{pmatrix}0& Y^* \\
Y& D+B\end{pmatrix}\begin{pmatrix}X^*X & -X^* \\ 
-X & 0\end{pmatrix}\right)=$$

$$=tr\left(\begin{pmatrix}0& Y^* \\
Y& D\end{pmatrix}\begin{pmatrix}0 & -X^* \\ 
-X & 0\end{pmatrix}\right)=tr\left(T_1^*\left(\begin{pmatrix}0& X^* \\
X& 0\end{pmatrix}\right)\begin{pmatrix}0 & -X \\ 
-X^* & 0\end{pmatrix}\right).$$

This  equality yields the formula for $f(X)$ in the statement of this lemma. 
 
 Next, let us show the equivalence between $a)$ and $b)$.\\
 
\textit{Proof of }$a)\Rightarrow b): $  Let $W$ be a solution of Problem \ref{problem} subjected to  $T_1$ and $V_1$.

Since $T_{1}^*|_{WM_kW}$ is irreducible, there is $\delta\in P_k$ such that \begin{center}
$\Im(\delta)=\Im(W)$ and 
$T_1^*(\delta)=\lambda\delta$, 
\end{center} 

where $\lambda$ is the spectral radius of $T_{1}^*|_{WM_kW}$ by Perron-Frobenius theory \cite[Theorem 2.3]{evans}.

Now, $T_1^*(V_1^{\perp}M_k+M_kV_1^{\perp})\subset V_1^{\perp}M_k+M_kV_1^{\perp}$ implies that
\begin{center}
$V_1T_1^*(V_1^{\perp}\delta +V_1\delta V_1^{\perp})V_1=0$. 
\end{center}

Thus, 
$\lambda V_1\delta V_1=V_1T_1^*(\delta)V_1=V_1T_1^*(V_1^{\perp}\delta +V_1\delta V_1^{\perp})V_1+V_1T_1^*(V_1\delta V_1)V_1=V_1T_1^*(V_1\delta V_1)V_1$.\\

Since $\ker(\delta)\cap\Im(V_1)=\ker(W)\cap\Im(V_1)=\{\vec{0}\}$ then  
$V_1\delta V_1\in (P_k\cap  V_1M_kV_1)\setminus \{0\}$.

By Lemma \ref{lemma2}, $V_1T_1^*(\cdot)V_1|_{V_1M_kV_1}$ is irreducible. Thus,
$\Im(V_1\delta V_1)=\Im(V_1)$. \\

Next, since 
$V_1T_1^*(V_1\delta V_1)V_1=\lambda V_1\delta V_1$, $\Im(V_1\delta V_1)=\Im(V_1)$ and  
$V_1T_1^*(V_1)V_1= V_1$
then $\lambda=1$,

 by item $c)$ of Lemma \ref{lemma2}.
Moreover, the geometric multiplicity of $\lambda$   is $1$ (Lemma \ref{lemma3}). \\

Therefore, 
$V_1\delta V_1=\mu V_1$, for some $\mu>0$.
Thus, 
$\frac{\delta}{\mu}=\begin{pmatrix}Id& S^* \\
S& SS^*\end{pmatrix}$, for some $S\in M_{k-s\times s}(\mathbb{C})$.

So $T_1^*\left(\begin{pmatrix}Id& S^* \\
S& SS^*\end{pmatrix}\right)=\begin{pmatrix}Id& S^* \\
S& SS^*\end{pmatrix}$ and $W$ is the orthogonal projection onto $\Im\left(\begin{pmatrix}Id \\
S \end{pmatrix}\right)$. \\

Finally, note that $S$ is a zero of $f(X)$. The proof that $a)\Rightarrow b)$ is complete.\\

\textit{Proof of }$b)\Rightarrow a):$ Let $S\in M_{k-s\times s}(\mathbb{C})$ be a zero of $f(X)$.\\

Since $V_1T_1^*(V_1)V_1=V_1$ and $T_1^*(V_1^{\perp}M_k+M_kV_1^{\perp})\subset V_1^{\perp}M_k+M_kV_1^{\perp}$ then 

\begin{center}
$T_1^*\left(\begin{pmatrix}Id& S^* \\
S& SS^*\end{pmatrix}\right)=\begin{pmatrix}Id& Z^* \\
Z& R\end{pmatrix}\in P_k.$

\end{center}

Since $f(S)=0$ then  $$\Im\left(\begin{pmatrix}S^*S & -S^* \\ 
-S & Id\end{pmatrix}\right)=\Im\left(\begin{pmatrix}S^* \\ 
-Id\end{pmatrix}\right)\subset\ker\left(\begin{pmatrix}Id& Z^* \\
Z& R\end{pmatrix}\right),$$ 

which is equivalent to $S^*=Z^*$ and $ZS^*=R$.
Thus, \begin{center}
$T_1^*\left(\begin{pmatrix}Id& S^* \\
S& SS^*\end{pmatrix}\right)=\begin{pmatrix}Id& S^* \\
S& SS^*\end{pmatrix}.$

\end{center}


Let $W_1$ be the orthogonal projection onto $\Im\left(\begin{pmatrix}Id& S^* \\
S& SS^*\end{pmatrix}\right)$.  

Note that
\begin{itemize}
\item $T_1^*(W_1M_kW_1)\subset W_1M_kW_1$,
\item $T_1^*|_{W_1M_kW_1}$ has spectral radius equals to 1 (by item $c)$ of Lemma \ref{lemma2}),
\item $\Im(W_1)\cap\ker(V_1)=\{\vec{0}\}$,
\item $\ker(W_1)\cap\Im(V_1)=\{\vec{0}\}$.
\end{itemize}

In order to complete this proof, we must show that $T_1^*|_{W_1M_kW_1}$ is irreducible. 
If this is not the case, then 
there is $\delta_1\in P_k\cap  W_1M_kW_1$ such that $T_1^*(\delta_1)=\alpha\delta_1$, $\alpha>0$ and $0<$ rank$(\delta_1)<$ rank$(W_1)$.\\

Next, if $V_1\delta_1V_1=0$ then $\Im(\delta_1)\subset\ker(V_1)$. Since $\Im(\delta_1)\subset \Im(W_1)$ then $\Im(W_1)\cap\ker(V_1)\neq\{\vec{0}\}$, which is a contradiction.
So $V_1\delta_1V_1\neq 0$.\\

Repeating the same argument used previously in $[a)\Rightarrow b)]$, we have 
$V_1T_1^*(V_1\delta_1 V_1)V_1=\alpha V_1\delta_1 V_1.$\\

Since rank$(V_1\delta_1V_1)\leq$ rank$(\delta_1)<$ rank$(W_1)=$ rank$(V_1)$ then $V_1T_1^*(\cdot)V_1|_{V_1M_kV_1}$ is not irreducible, which is a contradiction with Lemma \ref{lemma2}. \\

Finally, $T_1^*|_{W_1M_kW_1}$ is irreducible, $\ker(W_1)\cap\Im(V_1)=\{\vec{0}\}$ and rank$(W_1)=$ rank$(V_1)$.
\end{proof}

\vspace{0.5 cm}

\begin{remark} \label{remarkquadratic}

Finding a zero for $f(X)$ is an unconstrained quadratic minimization problem, if $M_{k-s\times s}(\mathbb{C})$ is regarded as a real vector space. We are only interested in this zero when its uniqueness is granted. The uniqueness occurs only when the real symmetric matrix associated to the bilinear form $g: M_{k-s\times s}(\mathbb{C})\times M_{k-s\times s}(\mathbb{C})\rightarrow\mathbb{R}$  is positive definite, where  $g(X,Y)\text{ is the real part of }$ \\
$$tr\left(T_1^*\left(\begin{pmatrix}Id& 0 \\ 
0& 0\end{pmatrix}\right)\begin{pmatrix}X^*Y & 0 \\ 
0 & 0\end{pmatrix}+T_1^*\left(\begin{pmatrix}0& 0 \\
0 & XY^*\end{pmatrix}\right)\begin{pmatrix}0 & 0 \\ 
0 & Id\end{pmatrix}+T_1^*\left(\begin{pmatrix}0& X^* \\
X& 0\end{pmatrix}\right)\begin{pmatrix}0 & -Y^* \\ 
-Y & 0\end{pmatrix}\right).\\
$$
\end{remark}

\section{Algorithms}
In this section, we bring all the results together in our algorithms. Our main algorithm  (Algorithm 3) checks whether a PPT state $B\in M_k\otimes M_k$ with a vector  $v\in\Im(B)$ with tensor rank $k$ can be put in the filter normal form or not. It searches for all pairs of orthogonal projections $(V,W)$ as described in problem \ref{problem} for a positive map $T:M_k\rightarrow M_k$ equivalent to $G_B((\cdot)^t):M_k\rightarrow M_k$ . This  procedure reproduces the proof of the theorem \ref{theoremprincipal}, particularly the part $(3\Rightarrow 2)$ .

\vspace{0.5 cm}
\noindent\textbf{Algorithm 1:} Given a completely positive map $T:VM_kV\rightarrow VM_kV$, this algorithm finds $V_1M_kV_1\subset VM_kV$ left invariant by $T:VM_kV\rightarrow VM_kV$ such that $T|_{V_1M_kV_1}$ is irreducible. Note that every time $V$ is redefined its rank decreases. So the process shall stop.  \\

\begin{itemize}
\item[Step 1:] Compute rank$(V)$.\\\\
$\bullet$ If rank$(V)=1$ then define $V_1=V$.\\
$\bullet$ If rank$(V)\neq 1$ then do Step 2.\\

\item[Step 2:] Find the spectral radius $\lambda$ of $T:VM_kV\rightarrow VM_kV$, compute $\dim(\ker(T-\lambda Id|_{VM_kV})))$ and find a Perron eigenvector $\gamma\in VM_kV$ associated to $\lambda$.\\\\
$\bullet$ If $\dim(\ker(T-\lambda Id|_{VM_kV})))=1$ and $\Im(\gamma)=\Im(V)$ then do Step 3.\\
$\bullet$ If $\dim(\ker(T-\lambda Id|_{VM_kV})))=1$  and $\Im(\gamma)\neq\Im(V)$ then redefine V as the orthogonal projection onto $\Im(\gamma)$ and do Step 3.\\
$\bullet$ If $\dim(\ker(T-\lambda Id|_{VM_kV})))\neq 1$ then find an Hermitian matrix $\gamma'\in VM_kV$  and $\epsilon>0$ such that $T(\gamma')=\lambda\gamma'$, $\gamma-\epsilon\gamma'\in P_k\setminus\{0\}$ and rank$(\gamma-\epsilon\gamma')<$rank$(\gamma)$. Redefine $V$ as the orthogonal projection onto $\Im(\gamma-\epsilon\gamma')$ and repeat Step 2.\\

\item[Step 3:] Find a Perron eigenvector $\delta\in VM_kV$ of $VT^*(\cdot)V:VM_kV\rightarrow VM_kV$ associated to $\lambda$.\\\\
$\bullet$ If $\Im(\delta)=\Im(\gamma)$ then $T:VM_kV\rightarrow VM_kV$ is irreducible. Define $V_1=V$.\\
$\bullet$ If $\Im(\delta)\neq\Im(\gamma)$ then redefine $V$ as the orthogonal projection onto $\Im(V)\cap\ker(\delta)$ and return to Step 1.\\

\end{itemize}

\noindent\textbf{Algorithm 2:} This algorithm finds the unique solution $W$ of Problem \ref{problem} subjected to $T$ and $V$.\\

\begin{itemize}
\item[Step 1:] Find the spectral radius $\lambda$ of $T:VM_kV\rightarrow VM_kV$ and the invertible matrix $Q\in M_k$ of Lemma \ref{lemmaQ}.\\

\item[Step 2:] Define $T_1:VM_kV\rightarrow VM_kV$ as $T_1(X)=\frac{1}{\lambda}QT(Q^{-1}X(Q^{-1})^*)Q^*$. Regard  $M_{k-s\times s}(\mathbb{C})$ as a real vector space and find the unique zero $S\in M_{k-s\times s}$ of the quadratic function $f:M_{k-s\times s}(\mathbb{C})\rightarrow \mathbb{R}^+\cup\{0\}$ defined in Lemma \ref{solutionW}.\\
$\bullet$ If the unique zero exists then define $W$ as the orthogonal projection onto $\Im\left(Q^*\begin{pmatrix}Id_{s\times s}\\ 
S_{k-s\times s} \end{pmatrix}\right)$\\
$\bullet$ If the zero does not exist or it is not unique then there is no such $W$.\\\\
\end{itemize}

\noindent\textbf{Algorithm 3:} Given a PPT matrix $B\in M_k\otimes M_k$ and a vector $v\in\Im(B)$ with tensor rank $k$, this algorithm checks whether $G_B((\cdot)^t):M_k\rightarrow M_k$ is equivalent to a doubly stochastic map.\\

Find the invertible matrix $P\in M_k$ of Lemma \ref{lemmaP}. Let $A=(P\otimes Id)B(P\otimes Id)^*$ and $T:M_k\rightarrow M_k$ be $T(X)=G_A((\cdot)^t)$. In order to start the procedure set $V'=Id$.\\

\begin{itemize}
\item[Step 1:] Find $VM_kV\subset V'M_kV'$ such that $T(VM_kV)\subset VM_kV$ and $T|_{VM_kV}$ is irreducible via algorithm 1.\\\\
$\bullet$ If $V=V'$ then $T$ is equivalent to a doubly stochastic map and also $G_B((\cdot)^t):M_k\rightarrow M_k$.\\
$\bullet$ If $V\neq V'$ then do Step 2.\\

\item[Step 2:] Search for the unique solution $W$ of Problem \ref{problem} subjected to $T$ and $V$ via algorithm 2.\\\\
$\bullet$ If there is no such $W$ then $T$ is not equivalent to a doubly stochastic map and neither is $G_B((\cdot)^t):M_k\rightarrow M_k$.\\
$\bullet$ If there is such $W$ then find an invertible matrix $R\in M_k$ such that\begin{center}
 
$R(Id-V'+V)=(Id-V'+V)$  and $\Im(R(V'-W))=\Im(V'-V)$.
\end{center} 

Redefine $T$ as $RT(R^{-1}X(R^{-1})^*)R^*$ and  $V'$ as $V'-V$ then repeat Step 1.

\end{itemize}

\vspace{0.5 cm}

\begin{remark}  If the algorithm finds out that $T:M_k\rightarrow M_k$ is equivalent to a doubly stochastic map then 
the $s$ values attained by $V'$ $($in this run$)$ are orthogonal projections $V_1,\ldots V_s$ 
such that
\begin{tabbing}
\hspace{8 cm}\=\kill
 \hspace{3 cm}$V_iV_j=0$ for $i\neq j$,  \> $\sum_{i=1}^sV_i=Id$, \\ 
\hspace{3 cm} $T'(V_iM_kV_i)\subset V_iM_kV_i$, \> $T'|_{V_iM_kV_i}$ is irreducible
 for every $1\leq i\leq s$,
\end{tabbing} 
 where $T'(X)$ is the last value attained by $T$.  \\
 
 Recall that $T'(X)=QG_A((Q^{-1}X(Q^{-1})^*)^t)Q^*$ for some invertible $Q\in M_k$. Thus, $T'(X)=G_C(X^t)$, where $C=((Q^{-1})^t\otimes Q)A((Q^{-1})^t\otimes Q)^*$. 
By Proposition \ref{propkey},
$C=\sum_{i=1}^s C_i$, where $C_i=(V_i^t\otimes V_i)C(V_i^t\otimes V_i)$.\\

Note that $\Im (T'(X))\supset\Im(X)$, for every $X\in P_k$, and $T'|_{V_iM_kV_i}=G_{C_i}((\cdot)^t)|_{V_iM_kV_i}$ is irreducible. Therefore, $G_{C_i}((\cdot)^t)|_{V_iM_kV_i}$ is fully indecomposable. So the scaling algorithm \cite{gurvits2003, gurvits2004, cariellosink} applied to $G_{C_i}((\cdot)^t)|_{V_iM_kV_i}$ converges to  a doubly stochastic map $G_{D_i}((\cdot)^t)|_{V_iM_kV_i}$. Note that $D_i$ is the filter normal form of $C_i$.
\end{remark}

\section{A simple extension to $M_k\otimes M_m$}

Recall the identification $M_m\otimes M_k \simeq M_{mk}$. Let $B=\sum_{i=1}^nC_i\otimes D_i\in M_k\otimes M_m$ be a PPT matrix and define $\widetilde{B}\in M_{mk}\otimes M_{mk}$ as $$\widetilde{B}=\sum_{i=1}^n (Id_{m}\otimes C_i)\otimes (D_i\otimes Id_k).$$ 
The next result is the key to extend algorithm $3$ to PPT matrices in $M_k\otimes M_m$.
\begin{lemma}\label{lemmaextension} Let $B=\sum_{i=1}^nC_i\otimes D_i\in M_k\otimes M_m$ be a PPT matrix. Consider the PPT matrix $\widetilde{B}\in M_{mk}\otimes M_{mk}$ as defined above. Then, $G_B((\cdot)^t): M_k\rightarrow M_m$ is equivalent to a doubly stochastic map if and only if $G_{\widetilde{B}}((\cdot)^t): M_{mk}\rightarrow M_{mk}$ is equivalent to a doubly stochastic map.
\end{lemma}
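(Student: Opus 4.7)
The plan begins by unpacking the structure of $G_{\widetilde{B}}$ and $F_{\widetilde{B}}$ on $M_{mk}\simeq M_m\otimes M_k$. From the definition $\widetilde{B}=\sum_{i=1}^n (Id_m\otimes C_i)\otimes(D_i\otimes Id_k)$, a direct computation using $tr((Id_m\otimes C_i)(A\otimes X))=tr(A)\,tr(C_iX)$ yields, for $Y,Z\in M_m\otimes M_k$,
\[
G_{\widetilde{B}}(Y)=G_B(tr_1(Y))\otimes Id_k,\qquad F_{\widetilde{B}}(Z)=Id_m\otimes F_B(tr_2(Z)),
\]
where $tr_1,tr_2$ denote the partial traces over the first and second tensor factors. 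Since transposition distributes across the tensor structure (so $tr_1(Y^t)=tr_1(Y)^t$), one obtains $G_{\widetilde{B}}(Y^t)=G_B(tr_1(Y)^t)\otimes Id_k$; in other words $G_{\widetilde{B}}((\cdot)^t)$ factors as $\iota_k\circ G_B((\cdot)^t)\circ tr_1$, where $\iota_k(M)=M\otimes Id_k$. This reduction is the structural heart of the lemma.

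For the direction $(\Rightarrow)$, I would take invertible $R\in M_k,\,S\in M_m$ witnessing the doubly stochastic equivalence of $G_B((\cdot)^t)$, which (after unwinding the definitions of the adjoint $(G_B((\cdot)^t))^*=F_B(\cdot)^t$) means $SG_B((RR^*)^t)S^*=\sqrt{k/m}\,Id_m$ and $R^*F_B(S^*S)^tR=\sqrt{m/k}\,Id_k$. The natural ansatz is
\[
\widetilde{R}=\tfrac{1}{\sqrt[4]{m}}\,Id_m\otimes R,\qquad \widetilde{S}=S\otimes \tfrac{1}{\sqrt[4]{k}}\,Id_k.
\]
Substituting into the first formula above, $tr_1(\widetilde{R}\widetilde{R}^*)=\sqrt{m}\,RR^*$ and the conjugation by $\widetilde{S}$ supplies a $1/\sqrt{k}$, so $\widetilde{S}\,G_{\widetilde{B}}((\widetilde{R}\widetilde{R}^*)^t)\,\widetilde{S}^* = \sqrt{m/k}\cdot SG_B((RR^*)^t)S^*\otimes Id_k = Id_{mk}$, which is the first doubly stochastic condition. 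The adjoint condition follows the same pattern from the formula for $F_{\widetilde{B}}$ and the relation $R^*F_B(S^*S)^tR=\sqrt{m/k}\,Id_k$.

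The direction $(\Leftarrow)$ is the main obstacle: a pair $\widetilde{R},\widetilde{S}\in M_{mk}$ witnessing doubly stochastic equivalence of $G_{\widetilde{B}}((\cdot)^t)$ need not have tensor-product form, so one cannot simply project them onto $M_k$ and $M_m$ to recover $R$ and $S$. My plan is to invoke \cite[Corollary 3.5]{cariellosink}, the external result the author explicitly flags for this extension, which provides a characterization of doubly stochastic equivalence (via total support) that is stable under sandwiching by the padding map $\iota_k$ on the output side and the partial trace $tr_1$ on the input side. Applied to the factorization $G_{\widetilde{B}}((\cdot)^t)=\iota_k\circ G_B((\cdot)^t)\circ tr_1$ established in the first step, this result transfers the property backward from the ambient map $G_{\widetilde{B}}((\cdot)^t)$ to the core map $G_B((\cdot)^t)$, completing the proof. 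The entire nontrivial content of the lemma thus rests on this single structural input.
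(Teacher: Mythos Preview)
Your approach is essentially the paper's own: the paper establishes exactly the factorization $G_{\widetilde{B}}(Y^t)=G_B(tr_1(Y)^t)\otimes Id_k$ (written in coordinates) and then invokes \cite[Corollary 3.5]{cariellosink} to conclude the equivalence in both directions at once. Your explicit tensor-product construction of $\widetilde{R},\widetilde{S}$ for the $(\Rightarrow)$ direction is a correct elaboration the paper omits, but the structural core and the decisive appeal to the external corollary are identical.
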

\begin{proof}
Let $e_1,\ldots,e_m$ be the canonical basis of $\mathbb{C}^m$. Note that for every $\sum_{i,j=1}^m e_ie_j^t\otimes B_{ij}\in M_m\otimes M_k$, $$G_{\widetilde{B}}(\sum_{i,j=1}^m e_je_i^t\otimes B_{ij}^t)=G_B(\sum_{i=1}^m B_{ii}^t)\otimes Id_{ k}.$$

Therefore, by \cite[Corollary 3.5]{cariellosink}, $G_B((\cdot)^t): M_k\rightarrow M_m$ is equivalent to a doubly stochastic map if and only if $G_{\widetilde{B}}((\cdot)^t): M_{mk}\rightarrow M_{mk}$ is equivalent to a doubly stochastic map.
\end{proof}

\begin{corollary} Let $B\in M_k\otimes M_m$ and $\widetilde{B}\in M_{mk}\otimes M_{mk}$ be as in Lemma \ref{lemmaextension}. Then, $B$ can be put in the filter normal form if and only if $\widetilde{B}$ can be put in the filter normal form.
\end{corollary}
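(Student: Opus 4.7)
The plan is to combine two facts already established in the paper: the characterization of matrices admitting a filter normal form via doubly stochastic equivalence, and Lemma \ref{lemmaextension}. Specifically, the introduction recalls that a positive semidefinite Hermitian matrix $A\in M_k\otimes M_m$ can be put in the filter normal form if and only if the positive map $G_A((\cdot)^t):M_k\rightarrow M_m$ is equivalent to a doubly stochastic map (cited to \cite{cariellosink}). This characterization is the only nontrivial ingredient and it applies verbatim to both $B\in M_k\otimes M_m$ and $\widetilde{B}\in M_{mk}\otimes M_{mk}$, since $\widetilde{B}$ is also a positive semidefinite Hermitian matrix (in fact PPT, as noted in the statement of Lemma \ref{lemmaextension}).

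First, I would verify that $\widetilde{B}$ is indeed in the class to which the characterization applies, i.e.\ that it is PPT. Writing $\widetilde{B}=\sum_{i=1}^n (Id_m\otimes C_i)\otimes(D_i\otimes Id_k)$ and computing the partial transposition on the second tensor factor, one obtains $\sum_{i=1}^n(Id_m\otimes C_i)\otimes (D_i^t\otimes Id_k)$, which is a tensor product reorganization of $Id_m\otimes B^{t_2}\otimes Id_k$; similarly $\widetilde{B}$ itself rearranges to $Id_m\otimes B\otimes Id_k$. Since $B$ and $B^{t_2}$ are both positive semidefinite, so are $\widetilde{B}$ and $\widetilde{B}^{t_2}$.

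Next, I chain the equivalences: $B$ can be put in the filter normal form iff $G_B((\cdot)^t):M_k\rightarrow M_m$ is equivalent to a doubly stochastic map (by the characterization); iff $G_{\widetilde{B}}((\cdot)^t):M_{mk}\rightarrow M_{mk}$ is equivalent to a doubly stochastic map (by Lemma \ref{lemmaextension}); iff $\widetilde{B}$ can be put in the filter normal form (again by the characterization, applied to $\widetilde{B}$).

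There is no real obstacle here: the corollary is a direct three-step equivalence. The only cosmetic care needed is to make sure the characterization really is phrased symmetrically for square and rectangular maps (it is, in the introductory discussion), and to note explicitly that $\widetilde{B}$ belongs to the square case $M_{mk}\otimes M_{mk}$, so both directions of the characterization are available.
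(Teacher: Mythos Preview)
Your proposal is correct and matches the paper's approach: the corollary is stated without proof precisely because it follows immediately from Lemma \ref{lemmaextension} together with the characterization, recalled in the introduction, that $A$ can be put in the filter normal form if and only if $G_A((\cdot)^t)$ is equivalent to a doubly stochastic map. Your extra verification that $\widetilde{B}$ is PPT is fine but not strictly needed here, since that is already asserted in the statement of Lemma \ref{lemmaextension}.
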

Thus, if there is a vector $v \in \Im(\widetilde{B})$ with tensor rank $mk$ then we can run algorithm $3$ with $\widetilde{B}$. If the algorithm finds out that $G_{\widetilde{B}}((\cdot)^t): M_{mk}\rightarrow M_{mk}$ is equivalent to a doubly stochastic map then $B$ can be put in the filter normal form.

\section*{Summary and Conclusion}

In this work  we described a procedure to determine whether $G_A:M_k\rightarrow M_k$, $G_A(X)=\sum_{i=1}^n tr(A_iX)B_i$ is equivalent to a doubly stochastic map or not when $A=\sum_{i=1}^nA_i\otimes B_i \in M_k\otimes M_k\simeq M_{k^2}$ is a PPT matrix and there is a full tensor rank vector within its image. The difficult  part of the algorithm is finding Perron eigenvectors of $G_A((\cdot)^t):M_k\rightarrow M_k$. 

This procedure can be used to determine whether a PPT matrix can be put in the filter normal form, which is a very useful tool to study entanglement of quantum mixed states. The existence of a full tensor rank vector within its range  is a necessary condition to put a separable matrix in the filter normal form.

Finally, we noticed that the procedure can be extended to PPT matrices in  $M_k\otimes M_m$.


\begin{bibdiv}
\begin{biblist}

\bib{gurvits2003}{article}{
  title={Classical deterministic complexity of Edmonds' Problem and quantum entanglement},
  author={Gurvits, Leonid},
  booktitle={Proceedings of the thirty-fifth annual ACM symposium on Theory of computing; Jun 9-–11; San Diego, CA, USA. New York: ACM press},
  pages={10--19},
  year={2003},
  organization={ACM}
}

\bib{gurvits2004}{article}{
  title={Classical complexity and quantum entanglement},
  author={Gurvits, Leonid},
  journal={Journal of Computer and System Sciences},
  volume={69},
  number={3},
  pages={448--484},
  year={2004},
  publisher={Elsevier}
}

\bib{peres}{article}{
    title={Separability criterion for density matrices},
  author={Peres, Asher},
  journal={Physical Review Letters},
  volume={77},
  number={8},
  pages={1413},
  year={1996},
  publisher={APS}
}

\bib{horodeckifamily}{article}{
  title={Separability of mixed states: necessary and sufficient conditions},
  author={Horodecki, M.},
  author={Horodecki, P.},
  author={Horodecki, R.},
  journal={Phys. Lett. A.},
  volume={223},
  pages={1--8},
  year={1996},
  publisher={Elsevier}
}

\bib{leinaas}{article}{
   author={Leinaas, J.M.},
   author={Myrheim, J.},
   author={Ovrum, E.},
   title={Geometrical aspects of entanglement},
   journal={Phys. Rev. A},
   volume={74},
   issue={3},
   year={2006},
   pages={012313},
}

\bib{filternormalform}{article}{
   author={Verstraete, F.}
   author={Dehaene, J.}
   author={De Moor, B.}
   title={Normal forms and entanglement measures for multipartite quantum states},
   journal={Phys. Rev. A },
   volume={68},
   date={2003},
   pages={012103},
}

\bib{carielloQIC}{article}{
  title={Separability for weakly irreducible matrices},
  author={Cariello, D.},
  journal={Quantum Information \& Computation},
  volume={14},
  number={15-16},
  pages={1308--1337},
  year={2014}
}

\bib{guhnesurvey}{article}{
   author={G\"uhne, O.}
   author={T\'oth, G.}
   title={Entanglement detection},
   journal={Physics Reports},
   volume={474},
   year={2009},
   number={1},
   pages={1--75},
}

\bib{Git}{article}{
   author={Gittsovich, O.},
   author={G\"uhne, O.}
   author={Hyllus, P.}
   author={Eisert, J.}
   title={Unifying several separability conditions using the covariance matrix criterion},
   journal={Phys. Rev. A},
   volume={78},
   year={2008},
   pages={052319},
}

\bib{Landau}{article}{
   author={Landau, L. J.},
   author={Streater, R. F.},
   title={On Birkhoff's Theorem for Doubly Stochastic Completely Positive Maps of Matrix Algebras},
   journal={Linear Algebra and its Applications},
   volume={193},
   year={1993},
   pages={107-127},
}

\bib{cariellosink}{article}{
   author={Cariello, D.},
   title={Sinkhorn-Knopp theorem for rectangular positive maps},
   journal={Linear and Multilinear Algebra },
   date={2018},
   pages={DOI: 10.1080/03081087.2018.1491524},
}

\bib{Bapat}{article}{
   author={Bapat, Ravindra},
   title={$D_1AD_2$ Theorems for Multidimensional Matrices},
   journal={Linear Algebra and its Applications},
   volume={48},
   year={1982},
   pages={437-442},
}

\bib{Brualdi}{article}{
   author={Brualdi, Richard A.},
   title={The $DAD$ theorem for arbitrary row sums},
   journal={Proc. Amer. Math. Soc.},
   volume={45},
   year={1974},
   pages={189-194},
}

\bib{Sinkhorn2}{article}{
  title={Diagonal equivalence to matrices with prescribed row and column sums. II}
  author={Sinkhorn, Richard}
  journal={Proc. Amer. Math. Soc.},
  volume={45},
  number={2},
  pages={195--198},
  year={1974},
}

\bib{Sinkhorn}{article}{
  title={Concerning nonnegative matrices and doubly stochastic matrices},
  author={Sinkhorn, Richard}
  author={Knopp, Paul},
  journal={Pacific Journal of Mathematics},
  volume={21},
  number={2},
  pages={343--348},
  year={1967},
  publisher={Oxford University Press}
}

\bib{garg}{article}{
  title={A deterministic polynomial time algorithm for non-commutative rational identity testing},
  author={Garg, Ankit},
  author={Gurvits, Leonid}, 
  author={Oliveira, Rafael}, 
  author={Wigderson, Avi}
  booktitle={Foundations of Computer Science (FOCS), 2016 IEEE 57th Annual Symposium on},
  pages={109--117},
  year={2016},
  organization={IEEE}
}

\bib{garg2}{article}{
  title={Algorithmic and optimization aspects of Brascamp-Lieb inequalities, via Operator Scaling},
  author={Garg, Ankit},
  author={Gurvits, Leonid}, 
  author={Oliveira, Rafael}, 
  author={Wigderson, Avi}
  journal={arXiv:1607.06711v3},
}

\bib{carielloIEEE}{article}{
   author={Cariello, D.},
   title={Completely Reducible Maps in Quantum Information Theory},
   journal={IEEE Transactions on Information Theory},
   volume={62},
   date={2016},
   number={4},
   pages={1721-1732},
  
}

\bib{HUHTANEN}{article}{
  title={Computational geometry of positive definiteness},
  author={Huhtanen, Marko},
  author={Seiskari, Otto},
  journal={Linear Algebra and its Applications},
  volume={437},
  number={7},
  pages={1562--1578},
  year={2012},
}

\bib{Zaidi}{article}{
   author={Zaidi, Abdelhamid}
   title={Positive definite combination of symmetric matrices},
   journal={IEEE transactions on signal processing},
   volume={53},
   number={11},
   date={2005},
   pages={4412--4416},
}

\bib{lovasz}{article}{
  title={Singular spaces of matrices and their application in combinatorics},
  author={Lov{\'a}sz, L{\'a}szl{\'o}},
  journal={Boletim da Sociedade Brasileira de Matem{\'a}tica-Bulletin/Brazilian Mathematical Society},
  volume={20},
  number={1},
  pages={87--99},
  year={1989},
  publisher={Springer}
}

\bib{meshulam1985}{article}{
  title={On the maximal rank in a subspace of matrices},
  author={Meshulam, Roy},
  journal={The Quarterly Journal of Mathematics},
  volume={36},
  number={2},
  pages={225--229},
  year={1985},
  publisher={Oxford University Press}
}

\bib{meshulam1989}{article}{
  title={On two extremal matrix problems},
  author={Meshulam, Roy},
  journal={Linear Algebra and its Applications},
  volume={114},
  pages={261--271},
  year={1989},
  publisher={Elsevier}
}

\bib{meshulam2017}{article}{
  title={Maximal rank in matrix spaces via graph matchings},
  author={Meshulam, Roy},
  journal={Linear Algebra and its Applications},
  volume={529},
  pages={1--11},
  year={2017},
  publisher={Elsevier}
}

\bib{pazzis}{article}{
  title={Affine spaces of symmetric or alternating matrices with bounded rank},
  author={de Seguins Pazzis, Cl{\'e}ment},
  journal={Linear Algebra and its Applications},
  volume={504},
  pages={503--558},
  year={2016},
  publisher={Elsevier}
}

\bib{flanders}{article}{
  title={On spaces of linear transformations with bounded rank},
  author={Flanders, H.},
  journal={Journal of the London Mathematical Society},
  volume={1},
  number={1},
  pages={10--16},
  year={1962},
  publisher={Wiley Online Library}
}

\bib{dieudonne}{article}{
  title={Sur une g{\'e}n{\'e}ralisation du groupe orthogonal {\`a} quatre variables},
  author={Dieudonn{\'e}, Jean},
  journal={Archiv der Mathematik},
  volume={1},
  number={4},
  pages={282--287},
  year={1948},
  publisher={Springer}
}

\bib{ivanyos}{article}{
  title={Generalized Wong sequences and their applications to Edmonds' problems},
  author={Ivanyos, G{\'a}bor},
  author={Karpinski, Marek},
  author={Qiao, Youming},
  author={Santha, Miklos},
  journal={Journal of Computer and System Sciences},
  volume={81},
  number={7},
  pages={1373--1386},
  year={2015},
  publisher={Elsevier}
}

\bib{Choi}{article}{
   author={Choi, M.D.},
   title={Completely Positive Linear Maps on Complex Matrices},
   journal={Linear Algebra and its Applications},
   volume={10},
   year={1975},
   pages={285-290},
}

\bib{Jamio}{article}{
   author={Jamio{\l}kowski, A.},
   title={ Linear transformations which preserve trace and positive
   semidefiniteness of operators},
   journal={Rep. Mathematical Phys.},
   volume={3},
   year={1972},
   pages={275-278},
}

\bib{pawel}{article}{
   title={Separability criterion and inseparable mixed states with positive partial transposition},
  author={Horodecki, Pawel},
  journal={Physics Letters A},
  volume={232},
  number={5},
  pages={333--339},
  year={1997},
  publisher={Elsevier}
}

\bib{evans}{article}{
  title={Spectral properties of positive maps on C*-algebras},
  author={Evans, David E.}
  author={H{\o}egh-Krohn, Raphael},
  journal={Journal of the London Mathematical Society},
  volume={2},
  number={2},
  pages={345--355},
  year={1978},
  publisher={Oxford University Press}
}

\bib{Bhatia1}{book}{
  title={Positive definite matrices},
  author={Bhatia, Rajendra},
  year={2009},
  publisher={Princeton (NJ): Princeton university press}
}

\end{biblist}
\end{bibdiv}

\section*{Appendix}

\begin{proof}[Proof of Lemma \ref{lemma1}]

By Definition \ref{defcompletepositive}, $T(X)=\sum_{i=1}^mA_iXA_i^*.$ 

Now, if $T(V)=\sum_{i=1}^mA_iVA_i^*\in VM_kV$ then 
$A_iVA_i^*\in VM_kV$, for every $i$.

Since $A_iVA_i^*=A_iV(A_iV)^*$ then $A_iV\in VM_k$. Thus,  $VA_i^*\in M_kV$, for every $i$.

Hence, 
$A_i(VX+YV)A_i^*\in VM_k+M_kV$, for every $X,Y\in M_k$.  
\end{proof}
\vspace{0,5cm}

\begin{proof}[Proof of Lemma \ref{lemma2}] 
a) Note that
$$0=tr(T(V_1)(V-V_1))=tr(V_1T^*(V-V_1))=tr(VV_1VT^*(V-V_1))=tr(V_1VT^*(V-V_1)V).$$

Thus, $\Im(VT^*(V-V_1)V)\subset\Im(V-V_1)$ and $$VT^*((V-V_1)M_k(V-V_1))V\subset (V-V_1)M_k(V-V_1).$$

\noindent
b) If $T|_{VM_kV}$ is not irreducible, neither is $VT^*(\cdot)V|_{VM_kV}$ by item a).\\

Now, if $VT^*(\cdot)V:VM_kV\rightarrow VM_kV$ is not irreducible then there is a proper subalgebra $V_1M_kV_1\subset VM_kV$ such that 
$VT^*(V_1M_kV_1)V\subset V_1M_kV_1$ and $V_1\neq 0$.\\

Hence, $0=tr(VT^*(V_1)V(V-V_1))=tr(T^*(V_1)V(V-V_1)V)=tr(V_1T(V-V_1))$. Thus, $$T((V-V_1)M_k(V-V_1))\subset (V-V_1)M_k(V-V_1).$$

\noindent
c) Let $\gamma\in P_k\cap VM_kV$ be such that $\gamma^2=\delta$. Denote by $\gamma^{+}$ the  Hermitian pseudo-inverse of $\gamma$. So $\gamma^{+}\gamma=\gamma\gamma^{+}=V$. 

Note that $\frac{1}{\lambda}\gamma^{+}T(\gamma V\gamma)\gamma^{+}=V$. Therefore, the operator norm of \begin{center}
$\frac{1}{\lambda}\gamma^{+}T(\gamma (\cdot) \gamma)\gamma^{+}:VM_kV\rightarrow VM_kV$
\end{center} induced by the spectral norm on $M_k$ is 1 \cite[Theorem 2.3.7]{Bhatia1}. \\

Let $Y\in VM_kV$ be an eigenvector of $T:VM_kV\rightarrow VM_kV$ associated to $\alpha$ such that the spectral norm of $\gamma^{+}Y\gamma^{+}$ is 1. Then, the spectral norm of  $\frac{1}{\lambda}\gamma^{+}T(\gamma \gamma^{+}Y\gamma^{+} \gamma)\gamma^{+}=\frac{\alpha}{\lambda}\gamma^{+}Y\gamma^{+}$ is smaller or equal to the operator norm of $\frac{1}{\lambda}\gamma^{+}T(\gamma (\cdot) \gamma)\gamma^{+}:VM_kV\rightarrow VM_kV$. Thus, $\frac{|\alpha|}{\lambda}\leq 1$. 
\end{proof}
\hspace{1cm}
\begin{proof}[Proof of Lemma \ref{lemma3}]
Since $VT^*(\cdot)V|_{VM_kV}$ and $T|_{VM_kV}$ are adjoint with respect to the trace inner product and $\lambda\in \mathbb{R}$ then $\text{rank}(VT^*(\cdot)V-\lambda Id|_{VM_kV})=       
\text{rank}(T-\lambda Id|_{VM_kV})$. So the geometric multiplicity of $\lambda$ is the same for both maps.\\

Moreover, $T|_{VM_kV}$ is irreducible if and only if $VT^*(\cdot)V|_{VM_kV}$ is irreducible by Lemma \ref{lemma2}. item $b)$. So conditions $1)$ and $2)$ are necessary for irreducibility by Perron-Frobenius theory (\cite[Theorem 2.3]{evans}).\\

Next, let us assume by contradiction that conditions $1)$ and $2)$ hold and $T|_{VM_kV}$ is not irreducible.

Thus, there is an orthogonal projection $V_1\in M_k$ such that $T(V_1M_kV_1)\subset V_1M_kV_1$, $V_1V=VV_1=V_1$ and $V-V_1\neq 0$.\\

By item $a)$ of Lemma \ref{lemma2}, $VT^*((V-V_1)M_k(V-V_1))V\subset (V-V_1)M_k(V-V_1)$. Since $T^*:M_k\rightarrow M_k$ is completely positive then $VT^*(\cdot)V:M_k\rightarrow M_k$ is too. \\

Hence, by Lemma \ref{lemma1},
\begin{equation}\label{eq=2}
VT^*((V-V_1)M_k+M_k(V-V_1))V\subset (V-V_1)M_k+M_k(V-V_1).
\end{equation}

Now, since $V_1V=V_1$ and $\lambda\delta=VT^*(\delta)V=VT^*((V-V_1)\delta+V_1\delta (V-V_1))V+VT^*(V_1\delta V_1)V$ then, by Equation \ref{eq=2},
$\lambda V_1\delta V_1=V_1T^*(V_1\delta V_1)V_1.$  

Note that $V_1\delta V_1\neq 0$, since $\Im(V_1)\subset \Im(V)=\Im(\delta)$. Hence, $\lambda$ is an eigenvalue of $V_1T^*(\cdot)V_1: V_1M_kV_1\rightarrow V_1M_kV_1$. Therefore, it is also an eigenvalue of its adjoint $T:V_1M_kV_1\rightarrow V_1M_kV_1$. \\

Actually, $\lambda$ is the spectral radius of $T|_{V_1M_kV_1}$, since $V_1M_kV_1\subset VM_kV$ and by hypothesis. So, by Perron-Frobenius theory, there is $\gamma'\in (V_1M_kV_1 \cap P_k)\setminus \{0\}$ such that $T(\gamma')=\lambda\gamma'$.\\

Note that $\gamma$ and $\gamma'$ are linearly independent, since $\Im(\gamma')\subset\Im(V_1)\neq\Im(V)=\Im(\gamma)$. Thus, the geometric multiplicity of $\lambda$ is not 1. Absurd!
\end{proof}

\end{document}